\numberwithin{equation}{section}
\newtheorem{theorem}{Theorem}[section]
\newtheorem{lemma}{Lemma}[section]
\newtheorem{proposition}{Proposition}[section]
\date{}
\newcommand{\di}{{\rm d}}
\begin{document}
	\title{Propagation Dynamics for a Spatially Periodic Integrodifference Competition Model\thanks{Research supported in part by the NSERC of
			Canada.}}
	
	\author{Ruiwen Wu and Xiao-Qiang Zhao\\
		Department of Mathematics and Statistics\\
		Memorial University of Newfoundland\\
		St. John's, NL A1C  5S7, Canada\\
		E-mail:\, rw2403@mun.ca \, \,  zhao@mun.ca}
	\maketitle
	\baselineskip 0.24in
	\noindent {\bf Abstract.}  
In this paper, we study the propagation dynamics for a class of  integrodifference competition models in a periodic habitat. 
An interesting feature of such a system is that multiple spreading speeds can be observed, which biologically means different species may have different spreading speeds. We  show that the model system admits a single spreading speed, and it coincides with the minimal wave speed of the spatially periodic traveling waves. A set of sufficient conditions for linear determinacy of the spreading speed is also given.
	
	\smallskip
	
	\noindent {\bf Key words:} 
    Integrodifference euqation, spatially periodic
	traveling waves, spreading speeds, linear determinacy.

	\smallskip
	
	\noindent {\bf AMS Subject Classification: }  35K57; 35B40; 37N25; 92D25
\section{Introduction}

Competition exists widely in the multispecies interaction. One of the crucial concepts on describing the competitive dynamics is called the competition exclusion principle, also referred to as Gause's Law \cite{kot2001}, which states that if two species attempting to occupy the limited resources cannot coexist, then one species will drive out the other. Competition exclusion provides useful insights on ecological balance, for instance, beneficial invasion can be introduced in pest control. Among those theoretical models, a spatially-independent difference system is  the following Leslie/Gower competition model:
\begin{eqnarray}\label{VL1}
	& &p_{n+1}=
	\frac{r_1p_n}{1+\frac{r_1-1}{C_1}(p_n+a_1q_n)}, \\ 
	& &q_{n+1}=
	\frac{r_2q_n}{1+\frac{r_2-1}{C_2}(q_n+a_2p_n)}, \nonumber
\end{eqnarray}
where $p_n$ and $q_n$ are the population densities of two competing species at time $n$. The competition between two species is governed by Beverton-Holt dynamics. $r_i \ (r_i>1)$, $C_i$ and $a_i$ are growth rates, carrying capacity of $i$-th species $(i=1,2)$, and interspecific competition coefficients, respectively. The global dynamics of system \eqref{VL1}
was discussed by Cushing et al. (see \cite[Lemma 2]{cushing}), and the competition exclusion occurs if interspecific competition is too large \cite{cushing}.

In nature, real species are usually spatially extended, and hence, the effects of dispersal processes are of high interest in spatial ecology. In well-known diffusion models, growth is usually assumed to occur at the same time with dispersal. However, in many situations such as annual and perennial species plants, migrating bird species, growth and dispersal are in distinct stages. Thus, integrodifference equations, which are continuous in space and discrete in time, become more realistic and popular. Kot and Schaffer \cite{Kot1986} first applied integrodifference euqations to population modeling. Since then, the study of integrodifference equations in ecology gained a lot of attention, see, e.g., \cite{Ding13,Ding15,hardin1990,latore1998,luscher2008,neubent2000,neubent1995,van1997}. Mathematical investigations includes the study of traveling waves \cite{Hsu2,Kot1992,Kot1996,Wein2008} and analytical approximation schemes \cite{Gilbert}. Recently, Zhou and Kot \cite{ZHOU2011} considered an integrodifference euqation with shifting species ranges subject to climate changes, and  Zhou and Fagan \cite{ZHOU2017} investigated a single-species integrodifference model with time-varying size. 

Apart from population dispersal, how species interact with space is another important topic in spatial ecology, since most lanscapes are heterogeneous. Travelling waves and spreading speeds are commonly used to explore the propagation dynamics. Shigesada et al. \cite{SKT} first studied the spreading speeds for single-species continuous-time model in a periodic patchy habitat. Later, Kawasaki and Shigesada \cite{Kawa2007} extended the work to discrete-time models. A general theory of travelling waves and spreading speeds in a periodic habitat was developed by Weinberger \cite{Wein02}, Liang and Zhao \cite{Liang2}, and Fang and Zhao \cite{FZ}. Recently, Yu and Zhao \cite{Yu} studied the propagation phenomena of a two species reaction-advection-diffusion competition model in a periodic habitat by appealing to the abstract results in \cite{FZ,Liang2}. 

Naturally, system \eqref{VL1} can be extended to the following spatial model:
\begin{eqnarray}\label{VL}
	& &p_{n+1}(x)=
	\int_{\mathbb{R}}\frac{r_1(y)p_n(y)}{1+b_1(y)(p_n(y)+a_1(y)q_n(y))}k_1(x,y)\di y, \\ 
	& &q_{n+1}(x)=
	\int_{\mathbb{R}}\frac{r_2(y)q_n(y)}{1+b_2(y)(q_n(y)+a_2(y)p_n(y))}k_2(x,y)\di y, \ x\in\mathbb{R}, \nonumber
\end{eqnarray}
where
\[b_i(x)=\frac{r_i(x)-1}{C_i(x)},\ (i=1,2),\]
$p_n(x)$ and $q_n(x)$ are the population densities of two competing species at time $n$ and location $x$. $k_i(x,y)$ is the probability density function for the destination $x$ of individuals from $y$ of $i$-th species $(i=1,2)$. 
As mentioned in \cite{ZHOU2011},  both population persistence and invasion dynamics are worthy to be considered. For  system \eqref{VL} with distance-dependent kernel, i.e., $r_i(x)$, $C_i(x)$, $a_i(x) \ (i=1,2)$ are constant and $k_i(x,y)=k_i(x-y)$, the propagation phenomena has been 
investigated by Lewis, Li and Weinberger \cite{Lewis2002} in the monostable case, and by Zhang and Zhao \cite{Zhang2012} in the  bistable case. Samia and Lutscher \cite{Lutscher2010} also studied the competitive coexistence for system \eqref{VL} in a patchy habitat in two specific cases: competitive-ability-varying one and carrying-capacity-varying one. 

Motivated by these works, we are interested in  the invasion dynamics of system \eqref{VL} in the case of competition exclusion.
In order to consider a periodic habitat, the coefficients $r(x)$, $C(x)$, $a(x)$ and $k(x,y)$ are assumed to be periodic functions of space. Therefore, we need the following assumptions for $r$, $C$, $a$ and $k(x,y)$:
\begin{enumerate}
	\item[(K1)] The habitat is $L$-periodic with some positive number $L$ such that $r(x)>1, \ C(x)>0,\ a(x)>0,$ and 
	\[r(x+L)=r(x), \ C(x+L)=C(x), \ a(x+L)=a(x), \  \forall x \in \mathbb{R}.\]
	\item[(K2)] The dispersal kenrnel $k(x,y)$ has the following properties:
	\begin{itemize}
		\item[(i)] $k(x+L,y+L)=k(x,y),  \ \forall x,y \in \mathbb{R}.$
		\item[(ii)] For each $x$, $k(x,y)$ satiesfies $k(x,y)\geqslant 0$ and 
		$\displaystyle\int_{-\infty}^{+\infty}k(x,y)\di y<\infty$, and for each $y$,
		\[\int_{-\infty}^{+\infty}k(x,y)\di x=1.\]
		\item[(iii)] $k(x,y)$ is lower semicontinuous in the sense that for each $(x_0,y_0)$ and each $\varepsilon>0$
		there is a positive number $\delta(x_0,y_0,\varepsilon)$ such that 
		$k(x,y)\geqslant k(x_0,y_0)-\varepsilon$ whenever $|x-x_0|+|y-y_0|\leqslant\delta(x_0,y_0,\varepsilon)$.
		\item[(iv)] There are an integer $\xi$ and a positive integer $\eta$ with the following properties:
		For every a with $|\alpha|\leqslant L/2$, and for every $\beta$ with $|\beta-\xi L| \leqslant L$, there
		is a $\eta+1$-tuple of numbers $x_0$, $x_1$, $\cdot\cdot\cdot$, $x_{\eta}$ such that $x_0=\alpha$, $x_{\eta}=\beta$,
		and $k(x_j,x_{j-1})>0$ for $j=1,2,\cdot\cdot\cdot, \eta$.	
		\item[(v)] $k(x, y)$ is uniformly $L_1$-continuous in $x$ in the sense that
		\[\lim\limits_{h\rightarrow0}\int_{-\infty}^{+\infty}|k(x+h,y)-k(x,y)|\di y=0, \ \text{uniformly in} \ x \in\mathbb{R}. \]
		\item[(vi)] There exists $\mu^*>0$ such that for fixed $\mu \in [0,\mu^*)$, $k_i(x,y)$ satisfies
		\[\int_{-\infty}^{+\infty}k(x,y)e^{-\mu(y-x)}\di y<\infty,\]
		where $\mu^*>0$ is the abscissa of convergence and it may be infinity.
		
	\end{itemize}
\end{enumerate}
 
 We remark that (K2(v)) is used to guarantee the equicontinuity of the integral operator $Q$, which is generated by system (\ref{VL}) \cite[[Hypotheses 2.1(iv)]{Wein2008}, then further to prove the compactness of $Q$ and its Fr\'{e}chet derivative $DQ(0)$ \cite[Lemma 2.1]{Ding13}. (K2(i)-(iv)) are needed in the proof of the strong positivity of $DQ(0)$. We write $r_i(x), \ C_i(x), \ a_i(x)$ and $k_i(x,y)$ to denote the relvant parametes of $i$-th species $(i=1,2)$.

The purpose of this paper is to study the spatially periodic travelling waves and spreading speeds for system \eqref{VL}. We first prove the existence of periodic steady states $(p^*(x),0)$ and $(0,q^*(x))$, and globally attractivity of $(p^*(x),0)$ for system (\ref{VL}) with periodic initial values under appropriate assumptions.
Note that the steady state $(0,0)$ is between $(p^*(x),0)$ and $(0,q^*(x))$ with respect to the competitive ordering, which implies the possibility of multiple spreading speeds. Such a situation was also pointed out in \cite{li2005}. By appealing to the theory developed in \cite{FZ} which allows the existence of  boundary fixed points between two ordered unstable and stable fixed points, we are able to prove 
the existence of the rightward spatially periodic travelling waves connecting $(p^*(x),0)$ to $(0,q^*(x))$, and show that the system has a single spreading speed under some appropriate conditions. We also obtain a set of sufficient conditions for the rightward spreading speed to be linearly determinate.

The rest of this paper is organized as follows. The existence of two semi-trivial periodic steady states and the global attractivity of one semi-trivial periodic steady state are investigated in Section 2. In Section 3, we present the results on spatially periodic travelling waves and the existence of single spreading speed. We otain the linear determancy for the spreading speed in Section 4.   
In Section 5, we apply the obtained results to a patchy senario in which the carry capacity is spatially varying, and we also provide a simple example to verify the linear determancy condition.
Some numerical simulations are presented to illustrate the analytic results.

\section{The periodic initial value problem}
In this section, we study the global dynamics of the spatially periodic integrodifference competition system with the periodic initial values.

Let $Y$ be the set of all continuous and $L$-period functions from $\mathbb{R}$ to $\mathbb{R}$, and $Y_+=\{\psi\in Y: \ \psi(x)\ge0,\forall x\in\mathbb{R}\}$. Equip $Y$ with the maximum norm $\|\phi\|_Y$, that is,
$\|\phi\|_Y=\max_{x\in \mathbb{R}}|\phi(x)|.$ Then $(Y,Y_+)$ is a strongly ordered Banach lattice.
Assume that $L$-periodic functions $r\in C(\mathbb{R})$ satisfying $r(x)>1, \ \forall x \in \mathbb{R}$. We can define
\[\check{L}\phi(x)= \int_{\mathbb{R}}r(y)\phi(y)k(x,y)\di y, \quad x\in\mathbb{R}.\]
By the arguments similar to those in \cite{Wein2008}, it is easy to verify $(\check{L})^\eta$ is strongly positive, where $\eta$ is the positive integer in (K2(iv)). By \cite[Lemma 3.1]{Liang}, we know that the spectral radius $\rho(\check{L})$ is a single eigenvalue of $\check{L}$, with an associated strongly positive $L$-periodic eigenfunction $\phi(x)$. 
It follows that the scalar periodic eigenvalue problem 
\begin{eqnarray}\label{VLpep1}
& & \lambda \phi(x)= \int_{\mathbb{R}}r(y)\phi(y)k(x,y)\di y, \quad x\in\mathbb{R},\nonumber\\ 
& &\phi(x+L)=\phi(x),\quad x\in\mathbb{R}
\end{eqnarray}
admits a principal eigenvalue $\lambda(k,r)=\rho(\check{L})$ associated with a strongly positive $L$-periodic eigenfunction $\phi(x)$.  
As a consequence of \cite[Theorem 2.3.4]{Zhaobook},  we have the following result.
\begin{proposition}\label{VLexistence}Assume that $L$-periodic functions $b(x), \ r(x),\ k(x,y)$ satisfied (K1) and (K2).
	Let $p_n(x,\phi)$ be the unique solution of the following equation:
	\begin{eqnarray}\label{VLseq}
	& & p_{n+1}(x)=
	\int_{\mathbb{R}}\frac{r(y)p_n(y)}{1+b(y)p_n(y)}k(x,y)\di y\  x\in\mathbb{R},\nonumber\\
	& & p_0(x)=\phi(x)\in Y_+, \quad x\in \mathbb{R}.
	\end{eqnarray} 
	Then the following statements are valid:
	\begin{enumerate}
		\item[(i)] If $\lambda(k,r)\leqslant 1$, then $p_n(x)=0$ is globally asymptotically stable with
		respect to initial values in $Y_+$;
		\item[(ii)] If $\lambda(k,r)>1$, then \eqref{VLseq} admits a unique positive $L$-periodic steady state
		$p^*(x)$, and it is globally asymptotically stable with respect to initial values
		in $Y_+\backslash\{0\}$.
	\end{enumerate}
\end{proposition}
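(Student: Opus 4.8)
The plan is to realize the right-hand side of \eqref{VLseq} as a single map and verify that it satisfies the hypotheses of \cite[Theorem 2.3.4]{Zhaobook}, whose conclusion is precisely the dichotomy (i)--(ii). Define $Q:Y_+\to Y_+$ by
\[
Q[\phi](x)=\int_{\mathbb{R}}\frac{r(y)\phi(y)}{1+b(y)\phi(y)}k(x,y)\,\di y,\qquad x\in\mathbb{R}.
\]
First I would check that $Q$ is well defined on $Y_+$: the change of variables $y\mapsto y+L$ together with (K1) and (K2(i)) gives $Q[\phi](x+L)=Q[\phi](x)$, so $Q$ preserves $L$-periodicity, while (K2(v)) guarantees that $Q[\phi]$ is continuous. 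Since the nonlinearity $f(y,p):=r(y)p/(1+b(y)p)$ is bounded above by $r(y)/b(y)=C(y)r(y)/(r(y)-1)$, which is uniformly bounded by (K1), and $g(x):=\int_{\mathbb{R}}k(x,y)\,\di y$ is continuous and $L$-periodic (hence bounded) by (K2(v)) and (K2(ii)), the map $Q$ sends $Y_+$ into a fixed ball; combined with an Arzel\`{a}--Ascoli argument this shows that $Q$ is compact and point dissipative.

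Next I would verify the order-theoretic structure. For each fixed $y$ one has $\partial_p f(y,p)=r(y)/(1+b(y)p)^2>0$, so $p\mapsto f(y,p)$ is increasing; together with $k\geqslant 0$ this makes $Q$ monotone. Moreover $p\mapsto f(y,p)/p=r(y)/(1+b(y)p)$ is strictly decreasing in $p>0$, which transfers (using the positivity furnished by the kernel) to the strong subhomogeneity $Q[t\phi]\gg t\,Q[\phi]$ for $t\in(0,1)$ and $\phi\gg 0$. These are exactly the monotonicity and subhomogeneity required by \cite[Theorem 2.3.4]{Zhaobook}.

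It remains to identify the threshold parameter with $\lambda(k,r)$ through the linearization at the trivial state. Since $Q[0]=0$ and $f(y,p)=r(y)p+o(p)$ as $p\to 0^+$, the Fr\'{e}chet derivative of $Q$ at $0$ is exactly $\check{L}$, i.e. $DQ(0)=\check{L}$. By the discussion preceding the proposition, $(\check{L})^\eta$ is strongly positive and, by \cite[Lemma 3.1]{Liang}, $\check{L}$ admits the principal eigenvalue $\lambda(k,r)=\rho(\check{L})$ with a strongly positive $L$-periodic eigenfunction. Thus $DQ(0)$ is a compact, strongly positive operator whose spectral radius is the threshold value $\lambda(k,r)$, which completes the verification of the hypotheses of \cite[Theorem 2.3.4]{Zhaobook}.

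I expect the main obstacle to be the compactness of $Q$ on the Banach lattice $Y$ of continuous $L$-periodic functions: one must extract from the uniform $L_1$-continuity (K2(v)) a modulus of continuity for the images $Q[\phi]$ that is uniform as $\phi$ ranges over bounded sets, so that Arzel\`{a}--Ascoli truly applies. Once compactness, monotonicity, strong subhomogeneity, and the identification $\rho(DQ(0))=\lambda(k,r)$ are in hand, statements (i) and (ii) follow directly from the threshold dynamics in \cite[Theorem 2.3.4]{Zhaobook}, with the globally attractive positive fixed point there being the desired $L$-periodic steady state $p^*(x)$.
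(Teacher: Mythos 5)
Your proposal takes essentially the same route as the paper: the paper offers no separate argument, deducing the proposition directly from \cite[Theorem 2.3.4]{Zhaobook} after the preceding discussion establishes that $(\check{L})^\eta$ is strongly positive and that $\lambda(k,r)=\rho(\check{L})$ is a principal eigenvalue with strongly positive $L$-periodic eigenfunction via \cite[Lemma 3.1]{Liang}. Your verification of monotonicity, subhomogeneity, compactness, and the identification $DQ(0)=\check{L}$ simply fills in the hypotheses the paper leaves implicit; the one small imprecision is calling $DQ(0)$ itself strongly positive, whereas only its $\eta$-th power is, which is precisely why the paper routes the principal-eigenvalue claim through the Liang--Zhao lemma rather than through strong positivity of $\check{L}$ directly.
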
  
Let $\mathbb{P}=PC(\mathbb{R},\mathbb{R}^2)$ be the set of all continuous and $L$-periodic functions from $\mathbb{R}$ to $\mathbb{R}^2$, and $\mathbb{P}_+=\{\psi\in \mathbb{P}: \ \psi(x)\ge0,\forall x\in\mathbb{R}\}$. 
Then $\mathbb{P}_+$ is a closed cone of $\mathbb{P}$ and induces a partial ordering on $\mathbb{P}$. Moreover, we introduce a norm $\|\phi\|_\mathbb{P}$ by
\[\|\phi\|_\mathbb{P}=\max_{x\in \mathbb{R}}|\phi(x)|.\]
It then follows that $(\mathbb{P},\|\phi\|_\mathbb{P})$ is a Banach lattice. For any $\varphi\in\mathbb{P}_+$, system \eqref{VL} has a unique nonnegative solution $(p_n(\cdot,\varphi),q_n(\cdot,\varphi))\in\mathbb{P}_+$.

In view of Proposition \ref{VLexistence}, there exists two positive $L$-periodic functions $p^*(x)$ and $q^*(x)$ such that $E_1:=(p^*(x),0)$, $E_2:=(0,q^*(x))$ are semi-trivial steady states of system \eqref{VL} provided that $\lambda(k_i,r_i)>1\ (i=1,2).$
Since we mainly concern about the case of the competition exclusion, we impose the following conditions on system \eqref{VL}:
\begin{enumerate}
	\item[(H1)]$\lambda(k_i,r_i)>1\ (i=1,2).$
	\item[(H2)]$\lambda\Big(k_1,\displaystyle\frac{r_1}{1+b_1a_1q^*}\Big)>1.$
	\item[(H3)]System \eqref{VL} has no steady state in Int$(\mathbb{P}_+)$.
\end{enumerate}

Note that (H1) guarantees the existence of two semi-trivial steady states of system \eqref{VL}. (H2) implies that $(0,q^*(x))$ is unstable.
Under the assumption (H1)--(H3), there are three steady states in $\mathbb{P}_+$: $E_0=(0,0)$, $E_1:=(p^*(x),0)$, and $E_2:=(0,q^*(x))$. Next, we use the theory developed in \cite{Hsu} for abstract competitive systems (see also \cite{Hess2}) to prove the global attractivity of $E_1$.
\begin{theorem}\label{VLEQ}
	Assume that (K1)--(K2), and (H1)--(H3) hold. Then $E_1=(p^*(x), 0)$ is globally asymptotically stable for initial values $\phi=(\phi_1,\phi_2)$ in $\mathbb{P}_+$ with $\phi_1\not\equiv 0$.
\end{theorem}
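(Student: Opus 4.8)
The plan is to recast system \eqref{VL} as an order-preserving discrete dynamical system under the competitive order and then invoke the abstract competition theory of \cite{Hsu}. First I would let $Q=(Q_1,Q_2):\mathbb{P}_+\to\mathbb{P}_+$ denote the time-one solution map of \eqref{VL}, so that $(p_{n+1},q_{n+1})=Q(p_n,q_n)$. Equipping $\mathbb{P}$ with the competitive order generated by the cone $K=Y_+\times(-Y_+)$, i.e.\ $(p_1,q_1)\le_K(p_2,q_2)$ iff $p_1\le p_2$ and $q_1\ge q_2$, I would check that each integrand $r_i u/(1+b_i(u+a_iv))$ is nondecreasing in the invading species' own density and nonincreasing in the competitor's density; differentiating shows $\partial_pQ_1\ge0,\ \partial_qQ_1\le0$ and $\partial_pQ_2\le0,\ \partial_qQ_2\ge0$, so $Q$ is monotone with respect to $\le_K$. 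Since $r_iu/(1+b_iu)$ is bounded by $r_i/b_i$, the map $Q$ is point dissipative, while (K2(v)) together with \cite[Lemma 2.1]{Ding13} yields compactness of $Q$ and of its relevant Fr\'echet derivatives, and (K2(i)-(iv)) give the strong positivity needed to make $Q$ eventually strongly monotone on $\mathrm{Int}(\mathbb{P}_+)$.

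Next I would record the order structure of the three equilibria. Since $p^*>0$ and $q^*>0$, comparing components gives $E_2\le_K E_0\le_K E_1$, so all three fixed points lie in the order interval $[E_2,E_1]_K$, into which every orbit is eventually absorbed by dissipativity. The delicate feature, already flagged in the Introduction via \cite{li2005}, is that the \emph{unstable} trivial state $E_0$ sits strictly between the two semi-trivial states in the competitive order; consequently one cannot simply appeal to convergence to a unique equilibrium inside an order interval, and the finer competition-exclusion machinery is required.

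I would then translate (H1)--(H3) into the hypotheses of the competitive-system theorem. By Proposition \ref{VLexistence} applied to the two linearizations at $E_0$, assumption (H1), $\lambda(k_i,r_i)>1$, makes $E_0$ a repeller for both components; (H3) rules out any coexistence fixed point in $\mathrm{Int}(\mathbb{P}_+)$; and (H2), $\lambda(k_1,r_1/(1+b_1a_1q^*))>1$, says species $1$ can invade $E_2$, i.e.\ $E_2$ is linearly unstable. In the monotone framework the nonexistence of a positive fixed point precludes both boundary equilibria being repellers, so the instability of $E_2$ forces $E_1$ to be stable; this is exactly the configuration covered by the competition-exclusion theorem of \cite{Hsu} (see also \cite{Hess2}), which then yields $E_1$ as the global attractor for every orbit meeting $\mathrm{Int}(\mathbb{P}_+)$.

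Finally I would dispose of the two types of admissible initial data. If $\phi_1\not\equiv0$ and $\phi_2\equiv0$, then $q_n\equiv0$ and $p_n$ solves \eqref{VLseq}, so $p_n\to p^*$ by Proposition \ref{VLexistence}(ii) and the orbit tends to $E_1$. If instead $\phi_1\not\equiv0$ and $\phi_2\not\equiv0$, strong positivity of the linearized operators pushes the iterate into $\mathrm{Int}(\mathbb{P}_+)$ after finitely many steps, and the preceding paragraph gives convergence to $E_1$. The main obstacle is precisely the step handling $E_0$ in the interior of the competitive order interval: one must verify carefully, using (H2) and (H3), that orbits are repelled from both $E_0$ and $E_2$ and cannot stall at a coexistence state, so as to exclude the bistable and coexistence alternatives of the trichotomy and single out convergence to $E_1$.
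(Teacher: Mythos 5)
Your overall strategy is the same as the paper's: view \eqref{VL} as a monotone discrete-time system with respect to the competitive order, note that (H3) excludes a coexistence fixed point, and invoke the abstract trichotomy of \cite[Theorem A]{Hsu} to conclude convergence to $E_1$. The framework-verification in your first paragraph (monotonicity, dissipativity, compactness, strong positivity) and the reduction of the boundary case $\phi_2\equiv 0$ to Proposition \ref{VLexistence} are fine.

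However, there is a genuine gap at the decisive step: ruling out alternative (c) of the trichotomy, namely that all orbits with $\phi_2\not\equiv 0$ converge to $E_2=(0,q^*)$. You dispose of this by asserting that ``the nonexistence of a positive fixed point precludes both boundary equilibria being repellers, so the instability of $E_2$ forces $E_1$ to be stable.'' This is not a hypothesis or a consequence you can feed into \cite[Theorem A]{Hsu}: the theorem takes as input a \emph{dynamical} statement (no relevant orbit is attracted to $E_2$), not the \emph{spectral} statement (H2) that $\lambda\bigl(k_1,\frac{r_1}{1+b_1a_1q^*}\bigr)>1$. Converting the latter into the former is exactly the technical heart of the paper's proof, and it is absent from yours. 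Concretely, the paper fixes $\varepsilon_0\in\Bigl(0,\,1-\frac{1}{\lambda\bigl(k_1,\frac{r_1}{1+b_1a_1q^*}\bigr)}\Bigr)$, uses uniform continuity of the nonlinearity to produce $\delta_0>0$ such that inside the $\delta_0$-neighborhood of $E_2$ the growth factor of the $p$-component dominates $(1-\varepsilon_0)\frac{r_1}{1+b_1a_1q^*}$, and then argues by contradiction: if an orbit with $\phi_1\not\equiv 0$ eventually stayed in that neighborhood, a comparison with the eigenfunction $\psi_1$ of \eqref{VLeq1} would give
\[
p_n(\cdot,\phi)\;\ge\;\eta\Bigl[(1-\varepsilon_0)\,\lambda\Bigl(k_1,\tfrac{r_1}{1+b_1a_1q^*}\Bigr)\Bigr]^{\,n-n_0}\psi_1,
\qquad n\ge n_0,
\]
with growth rate exceeding $1$, so $p_n$ would be unbounded --- contradicting boundedness of solutions. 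This yields the uniform repelling property $\limsup_{n\to\infty}\|(p_n,q_n)-(0,q^*)\|_{\mathbb{P}}\ge\delta_0$ for every $\phi$ with $\phi_1\not\equiv 0$, which is what actually excludes case (c). Without this argument (or an equivalent persistence argument), your appeal to the trichotomy cannot single out convergence to $E_1$, so the proof is incomplete precisely at the step you yourself flag as ``the main obstacle.''
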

\begin{proof}
	Let $P_n(x,\phi)=(p_n(x,\phi),q_n(x,\phi))$ be the solution of system \eqref{VL} with $p_0(x)=\phi(x)$. Since (H2) holds, we can fix $\varepsilon_0\in \Bigg(0,1-\displaystyle\frac{1}{\lambda(k_1,\frac{r_1}{1+b_1a_1q^*})}\Bigg)$. By the uniform continuity of 
	\[F(x,P):=\displaystyle\frac{r_1}{1+b_1(p+a_1q)}\] 
	on the set $\mathbb{R}\times[0,1]\times[0,m]$, where $M=\max\limits_{x\in \mathbb{R}}q^*(x)+1$, it follows that there exists $\delta_0\in(0,1)$ such that 
	\[|F(x,P^{(1)})-F(x,P^{(2)})|<\varepsilon_0\cdot A,\quad \forall P^{(1)}=(p^{(1)},q^{(1)}),P^{(2)}=(p^{(2)},q^{(2)})\in[0,1]\times[0,m],\] 
	provided that $|p^{(1)}-p^{(2)}|<\delta_0$ and $|q^{(1)}-q^{(2)}|<\delta_0$, where $A=\min\limits_{x\in \mathbb{R}}\displaystyle\frac{r_1(x)}{1+b_1(x)a_1(x)q^*(x)}$, $A>0$. Then we have the following claim.
	
	\noindent {\it Claim}. For all $\phi\in\mathbb{P}_+$ with $\phi_1\not\equiv0$, there holds \[\displaystyle\lim\sup_{n\rightarrow\infty}\|(p_n(x,\phi),q_n(x,\phi))-(0,q^*(x))\|_\mathbb{P}\ge\delta_0.\]
	
	Suppose, by way of contradiction, that $\displaystyle\lim\sup_{n\rightarrow\infty}\|(p_n(x,\phi),q_n(x,\phi))-(0,q^*(x))\|_\mathbb{P}<\delta_0$ for some $\hat\phi\in\mathbb{P}_+$ with $\hat\phi_1\not\equiv0.$ Then there exists $n_0>0$ such that $$\|p_n(\cdot,\hat\phi)\|_{Y}<\delta_0,\  \|q_n(\cdot,\hat\phi)-q^*(\cdot)\|_Y<\delta_0,\  \forall n\ge n_0.$$
	Consequently, we have
	\[F(x, P_n(x,\hat\phi))>F(x,(0,q^*(x)))-\varepsilon_0\cdot A=(1-\varepsilon_0)F(x,(0,q^*(x))),\quad  \forall n\ge n_0,\  x\in \mathbb{R}.\]
	Let $\psi_1(x)$ be a positive eigenfunction corresponding to the principal eigenvalue $\lambda\Big(k_1,\displaystyle\frac{r_1}{1+b_1a_1q^*}\Big)$. Then $\psi_1(x)$ satisfies
	\begin{align}\label{VLeq1}
	&  \lambda\Big(k_1,\displaystyle\frac{r_1}{1+b_1a_1q^*}\Big)\psi_1\!=\!\int_{\mathbb{R}}\frac{r_1(y)}{1+b_1(y)a_1(y)q^*(y)}\psi_1(y)k_1(x,y)\di y,\quad x\in\mathbb{R}, \nonumber\\ 
	& \psi_1(x+L)=\psi_1(x),\quad x\in\mathbb{R}.
	\end{align}
	Since $p_0(\cdot)=\hat{\phi_1}\geqslant\not\equiv0$, the comparison principle, as applied to the first equation in system \eqref{VL}, implies that $p_{n_0}(x,\hat\phi)>0,\ \forall x\in\mathbb{R}$. Then there exists small $\eta>0$ such that $p_{n_0}(\cdot)\ge\eta\psi_1\gg0$. Thus, $p_n(x,\hat\phi)$ satisfies
	\begin{eqnarray}\label{VLeq2}
	& &  p_{n+1}(x)\ge
	\int_{\mathbb{R}}\frac{r_1(y)(1-\varepsilon_0)}{1+b_1(y)a_1(y)q^*(y)}p_n(y)k_1(x,y)\di y, \quad \forall n>n_0,\ x\in\mathbb{R},\nonumber\\ 
	& &p_{n_0}(\cdot)\ge\eta\psi_1.
	\end{eqnarray}
	In view of \eqref{VLeq1}, it easily follows that 
	$\bar{p}_n(\cdot)=\eta [(1-\varepsilon_0)\lambda(k_1,\frac{r_1}{1+b_1a_1q^*})]^{(n-n_0)}\psi_1$ satisfies
	\begin{eqnarray}\label{VLeq3}
	& & \bar{p}_n(x)= \int_{\mathbb{R}}\frac{r_1(y)(1-\varepsilon_0)}{1+b_1(y)a_1(y)q^*(y)}\tilde{p}_n(y)k_1(x,y)\di y, \quad n>n_0, x\in\mathbb{R},\nonumber\\ 
	& &\bar{p}_{n_0}(\cdot)=\eta\psi_1.
	\end{eqnarray}
	By \eqref{VLeq2} and \eqref{VLeq3}, together with the standard comparison principle, it follows that  
	\[p_n(\cdot,\hat\phi)\ge\eta \Big[(1-\varepsilon_0)\lambda\Big(k_1,\displaystyle\frac{r_1}{1+b_1a_1q^*}\Big)\Big]^{(n-n_0)}\psi_1,\quad \forall  n\ge n_0.\] 
	Letting $n\rightarrow \infty$, we see that $p_n(\cdot,\hat{\phi})$ is unbounded, a contradiction.
	
	By the above claim and (H3), we exclude possibility (a) and (c) in \cite[Theorem A]{Hsu}. Since $E_2$ is repellent in some neighborhood of itself, it follows from \cite[Theorem A]{Hsu} that $E_1$ is globally asymptotically attractive.    
\end{proof}

\section{Spreading speeds and traveling waves}
In this section, we study the spreading speeds and spatially periodic traveling waves for system \eqref{VL}. 
By a change of variables $u_n=p_n, v_n=q^*(x)-q_n$, we transform system \eqref{VL} into the following cooperative system:
\begin{eqnarray}\label{NModel}
& &u_{n+1}(x)=
\int_{\mathbb{R}}\frac{r_1(y)u_n(y)}{1+b_1(y)(u_n(y)+a_1(y)(q^*(y)-v_n(y))}k_1(x,y)\di y, \\ 
& &v_{n+1}(x)=
\int_{\mathbb{R}}\frac{r_2(y)}{1+b_2(y)q^*(y)}\cdot \frac{b_2(y)a_2(y)q^*(y)u_n(y)+v_n(y)}{1+b_2(y)(q^*(y)+a_2(y)u_n(y)-v_n(y))}k_2(x,y)\di y. \nonumber
\end{eqnarray}
Then three steady states of \eqref{VL} become
\[\hat E_0=(0,q^*(x)),\ \hat E_1=(p^*(x),q^*(x)),\ \hat E_2=(0,0).\] 

Let $\mathcal{C}$ be the set of all bounded and continuous functions
from $\mathbb{R}$ to $\mathbb{R}^2$ and $\mathcal{C}_+=\{\phi\in\mathcal{C}:\phi(x)\geqslant 0,\ \forall x\in \mathbb{R}\}$.  Assume that $\beta$ is a  strongly positive $L$-periodic continuous function from $\mathbb{R}$ to $\mathbb{R}^2$. Set 
\begin{small}$$\mathcal{C}_{\beta}=\{u\in \mathcal{C}:\, 0\leqslant u(x)\leqslant \beta(x),\ \forall x\in \mathbb{R}\},\  \mathcal{C}^{per}_{\beta}=\{u\in \mathcal{C_\beta}:\,  u(x)=u(x+L),\ \forall x\in \mathbb{R}\}.$$\end{small} 
Let $X=C([0,L],\mathbb{R}^2)$ equipped with the maximum norm $|\cdot|_X$, $X_+=C([0,L],\mathbb{R}_+^2)$, $$X_{\beta}=\{u\in X:\ 0\leqslant u(x)\leqslant{\beta}(x),\  \forall x\in[0,L]\}\  \text{and}\  \overline{X}_{\beta}=\{u\in X_{\beta}: u(0)=u(L)\}.$$ Let $BC(\mathbb{R}, X)$ be the set of all continuous and bounded functions from $\mathbb{R}$ to $X$. Define \begin{small}$$\mathcal{X}=\{v\in BC(\mathbb{R},X):v(s)(L)=v(s+L)(0),\forall s\in \mathbb{R}\},  \mathcal{X}_+=\{v\in \mathcal{X}:v(s)\in X_+,\forall s\in \mathbb{R}\},$$\end{small} and  
$$\mathcal{X}_{\beta}=\{v\in BC(\mathbb{R},X_{\beta}):v(s)(L)=v(s+L)(0),\forall s\in \mathbb{R}\}.$$ 
We equip $\mathcal{C}$ with the compact open topology, that is, $u_m\to
u$ in $\mathcal{C}$ means that the sequence of $u_m(s)$ converges to $u(s)$ in $\mathbb{R}^m$ uniformly for $s$ in any compact set. We equip $\mathcal{C}$ with the norm $\|\cdot\|_\mathcal{C}$ given by \[\|u\|_{\mathcal{C}}=\sum\limits_{k=1}^{\infty}\frac{\max_{|x|\leqslant k}|u(x)|}{2^k},\ \forall u\in\mathcal{C},\] 
where $|\cdot|$ denotes the usual norm in $\mathbb{R}^m$, and
\[\|u\|_{\mathcal{X}}=\sum\limits_{k=1}^{\infty}\frac{\max_{|x|\leqslant k}|u(x)|_X}{2^k},\ \forall u\in\mathcal{X}.\] 

Let $\beta(\cdot)=(p^*(\cdot),q^*(\cdot))$, and
%
$Q$ be a map on $\mathcal{C}_{\beta}$ with $Q[0]=0$ and $Q[\beta]=\beta$. We say that $V(x-cn,x)$ is an $L$-periodic rightward traveling wave of $Q$ if $V(\cdot+a,\cdot)\in \mathcal{C}_\beta$,\ $\forall a\in \mathbb{R}$, $Q^n[V(\cdot,\cdot)](x)=V(x-cn,x)$, $\forall n\ge0$, and $V(\xi,x)$ is an $L$-periodic function in $x$ for any fixed $\xi\in\mathbb{R}$. Moreover, we say that $V(\xi,x)$ connects $\beta$ to $0$ if $\lim_{\xi\rightarrow -\infty}|V(\xi,x)-\beta(x)|=0$ and $\lim_{\xi\rightarrow +\infty}|V(\xi,x)|=0$ uniformly for $x\in\mathbb{R}$. According to \cite{Yu}, we need the following assumptions:
\begin{itemize}
	\item[(A1)] $Q$ is $L$-periodic, that is, $\mathcal{T}_a[Q[u]]
	=Q[\mathcal{T}_a[u]],\quad  \forall u\in \mathcal{C}_{\beta},\,
	a\in L\mathbb{Z}$.
	\item[(A2)] $Q:\, \mathcal{C}_{\beta} \to \mathcal{C}_{\beta}$ is continuous
	with respect to the compact open topology.
	\item[(A3)] $Q:\, \mathcal{C}_{\beta} \to \mathcal{C}_{\beta}$ is monotone
	(order preserving) in the sense that $Q[u] \ge
	Q[w]$ whenever $u \ge w$.
	\item[(A4)] $Q$
	admits two $L$-periodic fixed points $0$ and $\beta$ in $\mathcal{C}_+$, and for any $z\in \mathcal{C}^{per}_{\beta}$ with $0\ll z\leqslant \beta$, we have $\lim\limits_{n\rightarrow\infty }Q^n[z](x)=\beta(x)$ uniformly for $x\in \mathbb{R}$.
	\item[(A5)] $Q[\mathcal{C}_{\beta}]$ is precompact in $\mathcal{C}_{\beta}$ with respect to the compact open topology.
\end{itemize}

Define an operator $Q=(Q_1,Q_2)$ on $\mathcal{C}$ by
\begin{eqnarray*}
	& &Q_1[u,v](x)=
	\int_{\mathbb{R}}\frac{r_1(y)u(y)}{1+b_1(y)(u(y)+a_1(y)(q^*(y)-v(y))}k_1(x,y)\di y, \\ 
	& &Q_2[u,v](x)=
	\int_{\mathbb{R}}\frac{r_2(y)}{1+b_2(y)q^*(y)}\cdot \frac{b_2(y)a_2(y)q^*(y)u(y)+v(y)}{1+b_2(y)(q^*(y)+a_2(y)u(y)-v(y))}k_2(x,y)\di y,  \nonumber
\end{eqnarray*}
where $U:=(u,v)\in \mathcal{C}$.

\begin{proposition}
	Assume that (K1)--(K2), and (H1)--(H3) hold. Then $Q$ satisfies the assumptions (A1)--(A5).
\end{proposition}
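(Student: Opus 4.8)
The plan is to verify the five assumptions (A1)--(A5) one at a time for the operator $Q=(Q_1,Q_2)$, drawing on the periodicity hypotheses (K1)--(K2) and the structural assumptions (H1)--(H3). Each property corresponds to a concrete feature of the nonlinear integral operator, so I would organize the proof as a checklist.

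For (A1), the $L$-periodicity of $Q$ follows directly from the $L$-periodicity of the coefficients $r_i, b_i, a_i, q^*$ (guaranteed by (K1) and Proposition~\ref{VLexistence}) together with the translation invariance (K2(i)) of the kernel, i.e. $k_i(x+L,y+L)=k_i(x,y)$; a change of variables $y\mapsto y+L$ in the integral gives $\mathcal{T}_a[Q[u]]=Q[\mathcal{T}_a[u]]$ for $a\in L\mathbb{Z}$. For (A2), continuity with respect to the compact open topology reduces to showing that if $u_m\to u$ uniformly on compact sets then $Q[u_m]\to Q[u]$ uniformly on compact sets; since the integrands are continuous in their arguments and the kernels are integrable (K2(ii)), I would split the integral into a large central region plus a tail, control the tail uniformly using the integrability of $k_i$ and the boundedness of the reaction terms on $\mathcal{C}_\beta$, and pass to the limit on the central region by dominated convergence. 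For (A5), precompactness, the key ingredient is (K2(v)): the uniform $L_1$-continuity of $k_i$ in $x$ yields equicontinuity of the family $Q[\mathcal{C}_\beta]$, while $Q$ maps into $\mathcal{C}_\beta$, so the images are uniformly bounded; the Arzel\`a--Ascoli theorem then gives precompactness in the compact open topology. This is exactly the role the paper flags for (K2(v)) in the remark following (K2).

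The two substantive assumptions are (A3) and (A4). For monotonicity (A3), I would check that each component $Q_i$ is nondecreasing in both $u$ and $v$ on the order interval $\mathcal{C}_\beta=[0,\beta]$. This is the whole point of the change of variables $v_n=q^*-q_n$: it converts the competitive system into a cooperative one. Concretely, I would differentiate (or examine the difference quotients of) the integrands of $Q_1$ and $Q_2$ with respect to $u$ and $v$ and confirm the partial derivatives are nonnegative on the region $0\le u\le p^*$, $0\le v\le q^*$. For $Q_1$ the integrand $\frac{r_1 u}{1+b_1(u+a_1(q^*-v))}$ is increasing in $u$ (Beverton--Holt type) and increasing in $v$ (since increasing $v$ decreases the denominator); for $Q_2$ a similar but more delicate sign check on $\frac{b_2a_2q^* u+v}{1+b_2(q^*+a_2u-v)}$ is needed, and since $k_i\ge0$ the monotonicity passes through the integral.

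For (A4) I must produce the two fixed points $0$ and $\beta$ and establish the stated global attractivity of $\beta$ among strongly positive periodic data. That $0$ and $\beta=(p^*,q^*)$ are fixed points is immediate from their being steady states $\hat E_2$ and $\hat E_1$ of the transformed system \eqref{NModel}. The attractivity claim is where I expect the main obstacle: I would restrict $Q$ to the periodic subspace $\mathcal{C}^{per}_\beta$, where it reduces to the Poincar\'e-type map of the spatially periodic system on $(Y,Y_+)$, and then translate the desired convergence $Q^n[z]\to\beta$ back to the original competitive variables, where it becomes precisely the statement that solutions starting from strongly positive periodic data converge to $E_1=(p^*,0)$. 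This is exactly Theorem~\ref{VLEQ} together with Proposition~\ref{VLexistence}, provided I verify that the strong positivity $0\ll z\le\beta$ in the cooperative variables corresponds to data with a nontrivial first component (so that Theorem~\ref{VLEQ} applies) and that the convergence is uniform in $x$. The care required in matching the strong-ordering condition across the change of variables, and in invoking the abstract competition result of Theorem~\ref{VLEQ} restricted to periodic initial data, is the step I anticipate needing the most attention.
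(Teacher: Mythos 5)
Your proposal is correct and follows essentially the same route as the paper: (A1) from translation invariance of the kernel and coefficients, (A3) from the cooperative structure created by the change of variables, (A4) by reduction to the global attractivity result of Theorem~\ref{VLEQ} on the periodic subspace (with exactly the bookkeeping you describe between $0\ll z\leqslant\beta$ and $\phi_1\not\equiv 0$), and (A5) via the $L_1$-continuity hypothesis (K2(v)) together with Arzel\`a--Ascoli. The only tactical difference is in (A2), where you use a tail-splitting and dominated-convergence argument adapted to the compact open topology while the paper uses a Lipschitz-type estimate on the nonlinearity $H_1$; both are valid, and your explicit sign checks of the partial derivatives for (A3) merely supply detail that the paper asserts without computation.
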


\begin{proof}
	According to the assumptions (K1)-(K2), it then easily follows that $Q$ is a monotone semiflow on $\mathcal{C}_{\beta}$. Note that if $U_n(x,\phi)=(u_n(x,\phi),v_n(x,\phi))$ is a solution of \eqref{NModel} with $(u_0(\cdot),v_0(\cdot))=(\phi_1,\phi_2):=\phi$, then so is $(u_n(x-a,\phi),v_n(x-a,\phi)),\ \forall a\in L\mathbb{Z}$. This implies that (A1) holds. By Theorem \ref{VLEQ}, it follows that $(A4)$ holds for $Q$. It remains to prove (A2) and (A5). 
	
	We take $Q_1$ as an example, since similar results hold for $Q_2$. Define an operator
	$$G_1(U)(x)=\int_{\mathbb{R}}(\|u\|+\|v\|)\cdot k_1(x,y)\di y,$$
	and 
	$$H_1(U)(x)=\frac{r_1(x)u(x)}{1+b_1(x)(u(x)+a_1(x)(q^*(x)-v(x))}.$$
	It can be verify that $\|G_1\|<+\infty$. For any $\varepsilon>0$, there exist an $\delta_1(\varepsilon)>0$ such that if $U_1=(u^{(1)},v^{(1)}), U_2=(u^{(2)},v^{(2)})\in \mathcal{C}_{\beta}$, with $\|U_1(x)-U_2(x)\|<\delta_1$, we have 
	\begin{align*}
	\|Q_1(U_1)-Q_1(U_2)\|&=\|\int_{\mathbb{R}}(H_1(U_1)(y)-H_1(U_2)(y))\cdot k_1(x,y)\di y\|,\\
	&\leqslant \|\int_{\mathbb{R}}[H_{1u}(\xi)(u^{(1)}-u^{(2)})(y)+H_{1v}(\xi)(v^{(1)}-v^{(2)})(y)]\cdot k_1(x,y)\di y\|\\
	&\leqslant \max\{\|H_{1u}\|,\|H_{1v}\|\}\cdot\|G_1(U_1-U_2)\|<\varepsilon,
	\end{align*}
	which implies that (A2) holds. 
	
	Regarding (A5), it is easy to check $Q_1$ is uniformly bounded. For the above $\varepsilon>0$, there exist an $\delta_2(\varepsilon)>0$ such that $\forall x_1, \ x_2 \in \mathbb{R}$ or any compact interval in $\mathbb{R}$ with $|x_1-x_2|<\delta_2$, since $k_1$ is $L_1$-contiuous, then we have
	$$|Q_1(U)(x_1)-Q_1(U)(x_2))|\leqslant \text{max}\{r_1(x)\}\|U\|\Big|\int_{\mathbb{R}}\Big(k_1(x_1,y)-k_1(x_2,y)\Big)\di y\Big|<\varepsilon,$$
	which implies that $Q_1$ is equicontinous. By the Arzel\`{a}-Ascoli theorem, it follows that $Q_1$ is compact.
	\end{proof}

Now we introduce a family of operators $\{\hat Q\}$ on $\mathcal{X}_\beta$:
\begin{equation}\label{def_Q}
\hat Q[v](s)(\theta):=Q[v_s](\theta),\quad \forall v\in\mathcal{X}_\beta,\ s\in\mathbb{R},\ \theta\in[0,L],
\end{equation}
where $v_s\in\mathcal{C}$ is defined by  
$$v_s(x)=v(s+n_x)(\theta_x),\quad \forall x=n_x+\theta_x\in\mathbb{R},\ n_x=L\left[\frac{x}{L}\right],\ \theta_x\in[0,L).$$
Let $\omega\in \overline{X}_\beta $ with $0\ll\omega\ll \beta$. Choose $\phi \in\mathcal{X}_\beta$ such that the following properties hold:
\begin{enumerate}
	\item[(C1)]$\phi(s)$ is nonincreasing in $s$;
	\item[(C2)] $\phi(s)\equiv0$ for all $s\ge0$;
	\item[(C3)]$\phi(-\infty)=\omega$. 
\end{enumerate}
Let $c$ be a given real number. According to \cite{Wein82}, we define an operator $R_{c}$ by
\[R_c[a](s):=\max\{\phi(s),T_{-c}\hat Q[a](s)\},\]
and a sequence of functions $a_n(c;s)$ by the recursion:
$$a_0(c;s)=\phi(s),\quad a_{n+1}(c;s)=R_{c}[a_n(c;\cdot)](s),$$
where $T_{-c}$ is a translation operator defined by
$T_{-c}[u](x)=u(x+c)$. 
As an consequence of arguments similar to those in \cite[Lemmas 3.1--3.3]{FZ}, we have the following observation.
\begin{lemma}
	The following statements are valid:
	\begin{enumerate}
		\item[(1)]For each $s\in \mathbb{R}$, $a_n(c,s)$ converges to $a(c;s)$ in $X$, where $a(c;s)$ is nonincreasing in both $c$ and $s$, and $a(c;\cdot)\in\mathcal{X}_{\beta}$. 
		\item[(2)]$a(c,-\infty)= \beta$, and $a(c,+\infty)$ exists in $X$ and is a fixed point of $\tilde{P}$. 
	\end{enumerate}
\end{lemma}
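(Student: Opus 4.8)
The plan is to follow the three-step scheme of \cite[Lemmas 3.1--3.3]{FZ}, specialized to the operator $\hat Q$ and the translation $T_{-c}$. First I would show that the sequence $a_n(c;\cdot)$ produced by the recursion $a_{n+1}=R_c[a_n]$ is nondecreasing in $n$ and order-bounded above by $\beta$. Monotonicity in $n$ is proved by induction: $a_1=\max\{\phi,T_{-c}\hat Q[\phi]\}\ge\phi=a_0$, and since $\hat Q$ is order preserving (inherited from (A3) via the definition \eqref{def_Q}) while $T_{-c}$ and the pointwise maximum also preserve order, one gets $a_{n+1}=R_c[a_n]\ge R_c[a_{n-1}]=a_n$. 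The bound $a_n\le\beta$ follows by the same induction from $\phi\le\beta$, $\hat Q[\beta]=\beta$, and $T_{-c}\beta=\beta$. Being monotone and order-bounded in the Banach lattice $X$, the sequence $a_n(c;s)$ converges for each $s$; to promote this to convergence in $X$, I would use the precompactness (A5) of $Q$, and hence of $\hat Q$, so that the limit $a(c;s)$ is continuous in $\theta$ and Dini's theorem yields convergence in the supremum norm of $X$.

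Next I would pass the monotonicity properties to the limit. That each $a_n(c;\cdot)$ is nonincreasing in $s$ follows by induction: $a_0=\phi$ is nonincreasing by (C1), and if $a_n$ is nonincreasing then so is $T_{-c}\hat Q[a_n]$, because $\hat Q$ commutes with translations in the traveling variable and is order preserving (so $\hat Q[a_n]$ is again nonincreasing) while $T_{-c}$ preserves this property, and the maximum with the nonincreasing $\phi$ remains nonincreasing. Monotonicity in $c$ rests on the elementary fact that for a nonincreasing $u$ and $c_1\le c_2$ one has $T_{-c_1}[u]\ge T_{-c_2}[u]$; combining this with the monotonicity of $R_c$ gives $a_n(c_1;\cdot)\ge a_n(c_2;\cdot)$ by induction on $n$, and passing to the limit shows $a(c;s)$ is nonincreasing in both $c$ and $s$. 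The bounds $0\le a(c;\cdot)\le\beta$, the boundary matching $v(s)(L)=v(s+L)(0)$, and continuity in $s$ are all inherited from the $a_n$, so $a(c;\cdot)\in\mathcal{X}_\beta$, establishing (1).

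For (2), because $a(c;s)$ is nonincreasing in $s$ and order-bounded, the one-sided limits $a(c,-\infty)$ and $a(c,+\infty)$ exist in $X$; both are independent of the traveling variable and hence lie in $\overline X_\beta$. To see that each is a fixed point of $\tilde P$, I would let $s\to\pm\infty$ in the identity $a(c;s)=\max\{\phi(s),T_{-c}\hat Q[a(c;\cdot)](s)\}$. For $s\to+\infty$, (C2) gives $\phi(s)\equiv0$, so the maximum collapses to the $\hat Q$-term and the limit satisfies $a(c,+\infty)=\tilde P[a(c,+\infty)]$. For $s\to-\infty$, (C3) gives $\phi(s)\to\omega$, so $a(c,-\infty)\ge\omega\gg0$, and the same passage yields $a(c,-\infty)=\tilde P[a(c,-\infty)]$; thus $a(c,-\infty)$ is a strongly positive fixed point in $\overline X_\beta$ with $0\ll a(c,-\infty)\le\beta$, and (A4) forces $a(c,-\infty)=\beta$.

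The step I expect to be the main obstacle is the interchange of the spatial limits $s\to\pm\infty$ with the operator $\hat Q$ that is needed to identify $a(c,\pm\infty)$ as fixed points of $\tilde P$. Because the kernels $k_i(x,y)$ are not assumed to have bounded range but only to decay in the sense of (K2(vi)), justifying that $\hat Q$ applied to the limiting constant profile equals $\lim_{s\to\pm\infty}\hat Q[a(c;\cdot)](s)$ requires a dominated-convergence argument based on the integrability and uniform $L^1$-continuity properties in (K2), together with the monotone structure of the sequence. This, along with the Dini upgrade from pointwise to uniform convergence used in the first step, is where the genuine analytic work concentrates; the monotonicity bookkeeping, by contrast, is routine.
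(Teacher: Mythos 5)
Your overall scheme matches the paper's: the paper does not prove this lemma itself but defers to \cite[Lemmas 3.1--3.3]{FZ}, and your three steps (monotone iteration bounded by $\beta$, upgrading pointwise to norm convergence via (A5) and Dini, monotonicity in $s$ and $c$ via the translation invariance of $\hat Q$, and identification of the limits at $\pm\infty$) reconstruct exactly that argument. Part (1) and the $s\to+\infty$ half of part (2) are correct as you present them: for $s\ge 0$, condition (C2) makes the maximum collapse to $T_{-c}\hat Q[a]$, and the interchange of $s\to+\infty$ with $\hat Q$ is justified by translation invariance together with continuity in the compact open topology (your dominated-convergence route is what underlies (A2) for this particular kernel operator, so either way is acceptable).

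There is, however, a genuine flaw at $s\to-\infty$. You assert that ``the same passage yields $a(c,-\infty)=\tilde P[a(c,-\infty)]$,'' but the passage is not the same: by (C3), $\phi(s)\to\omega\neq 0$ as $s\to-\infty$, so the maximum does \emph{not} collapse, and the limit identity is
\[
\alpha:=a(c,-\infty)=\max\{\omega,\tilde P[\alpha]\},
\]
which gives only $\alpha\ge\omega$ and $\alpha\ge\tilde P[\alpha]$, not the fixed-point equation. It cannot be upgraded directly, because nothing guarantees $\tilde P[\alpha]\ge\omega$ pointwise: $\omega$ is an arbitrary element with $0\ll\omega\ll\beta$, not a subsolution of $\tilde P$. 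Consequently your final step---``a strongly positive fixed point plus (A4) forces $\alpha=\beta$''---rests on an unproved premise. The standard repair (and the one in \cite{FZ}) is an iteration argument: from $\alpha\ge\tilde P[\alpha]$ and monotonicity of $\tilde P$, the sequence $\tilde P^n[\alpha]$ is nonincreasing and $\alpha\ge\tilde P^n[\alpha]\ge\tilde P^n[\omega]$ for all $n$; since $0\ll\omega\ll\beta$, assumption (A4) gives $\tilde P^n[\omega]\to\beta$, whence $\alpha\ge\beta$, so $\alpha=\beta$, and the fixed-point property then holds trivially because $\tilde P[\beta]=\beta$. With this one correction your proof is complete.
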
 
According to \cite{FZ,WLL2002}, we define two numbers 
\begin{eqnarray}\label{defc}
c^*_+=\sup\{c:a(c,+\infty)=\beta\},\quad \overline c_+=\sup\{c:a(c,+\infty)>0\}.
\end{eqnarray}
Clearly, $c^*_+\le \overline{c}_+$ due to the monotonicity of $a(c;\cdot)$ with respect to $c$.
The following two results come from \cite{FZ}.\\

\noindent{\bf Theorem A.} \cite[Theorem 3.8]{FZ} 
	{\it Let $Q$ be a continuous-time semifow on $\mathcal{C}_\beta$ with $Q[0]=0$, $Q[\beta]=\beta$, and   $\hat{Q}$ be defined as in \eqref{def_Q}. Suppose that $Q$ satisfies (A1)--(A5).
	Let $c^*_+$ and $\overline{c}_+$  be defined as in \eqref{defc}. Then the following statements are valid:
	\begin{enumerate}
		\item[(1)] For any $c\ge c^*_+$, there is an $L$-periodic rightward traveling $W(x-cn,x)$ connecting $\beta$ to some equilibrium $\beta_1\in C^{per}_\beta\backslash\{\beta\}$ with $W(\xi,x )$ be continuous and nonincreasing in $\xi\in \mathbb{R}$.
		\item[(2)]If, in addition, $0$ is an isolated equilibrium of $Q$ in $\mathcal{C}^{per}_\beta$, then for any $c\ge\overline{c}_+$ either of the following holds true:\begin{enumerate}
			\item[(i)] there exists an $L$-periodic rightward traveling $W(x-cn,x)$ connecting $\beta$ to $0$ with $W(\xi,x )$ be continuous and nonincreasing in $\xi\in \mathbb{R}$
			\item[(ii)]$Q$ has two ordered equilibria $\alpha_1$,$\alpha_2 \in C^{per}_\beta\backslash\{0,\beta\}$ such that there exist an $L$-periodic traveling wave $W_1(x-cn,x)$ connecting $\alpha_1$ and $0$ and an $L$-periodic traveling wave $W_2(x-cn,x)$ connecting $\beta$ and $\alpha_2$ with $W_i(\xi,x ), i=1,2$ be continuous and nonincreasing in $\xi\in \mathbb{R}$
		\end{enumerate}
		\item[(3)] For any $c< c^*_+$, there is no $L$-periodic traveling wave connecting $\beta$, and for any $c<\overline{c}_+$, there is no $L$-periodic traveling wave connecting $\beta$ to $0$.  
	\end{enumerate}}

\noindent{\bf Theorem B.} \cite[Remark 3.7]{FZ} \
{\it Let $Q$ be a continuous-time semifow on $\mathcal{C}_\beta$ with $Q[0]=0,Q[\beta]=\beta$ and  $\hat{Q}$ be correspondingly defined as in \eqref{def_Q}. Suppose that $Q$ satisfies (A1)--(A5).
	Let $c^*_+$ and $\overline{c}_+$ be defined as in \eqref{defc}. Then the following statements are valid:	
	\begin{enumerate}
		\item[(i)]If $\phi\in\mathcal{C}_{\beta}$, $0\le \phi\le \omega\ll \beta$ for some $\omega\in \mathcal{C}^{per}_{\beta}$, and $\phi(x)=0, \forall x\ge H$, for some $H\in \mathbb{R}$, then $\lim_{n\rightarrow\infty,x\ge cn}Q(\phi)=0$ for any $c>\overline{c}_+$.
		\item[(ii)]If $\phi\in\mathcal{C}_{\beta}$ and $\phi(x)\ge \sigma$, $\forall x\le K$, for some $\sigma\gg0$ and $K\in\mathbb{R}$, then $\lim_{n\rightarrow\infty,x\le cn}(Q(\phi)(x)-\beta(x))=0$ for any $c<c^*_+$.
	\end{enumerate}}

In order to show that $\overline{c}_+$ is the minimal wave speed for $L$-periodic traveling waves of system \eqref{NModel} connecting $\beta$ to $0$, we need the following assumption:
\begin{enumerate}
	\item[(H4)]$c^*_{1+}+c^*_{2-}>0$, where $c^*_{1+}$ and $c^*_{2-}$ are the rightward and leftward spreading speed for \eqref{u1} and \eqref{u2}, respectively.
\end{enumerate}

%

\begin{theorem}\label{MIN}
	Assume that (K1)--(K2), and (H1)--(H4) hold. Then for any $c\ge\overline{c}_+$, system \eqref{NModel} admits an L-periodic traveling wave $(U(x-cn,x),V(x-cn,x))$ connecting $\beta$ to $0$, with wave profile components $U(\xi,x)$ and $V(\xi,x)$ being continuous and non-increasing in $\xi$, and for any $c<\overline{c}_+$, there is no such traveling wave connecting $\beta$ to $0$.
\end{theorem}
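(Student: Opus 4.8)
The plan is to invoke Theorem A for the structural dichotomy and then eliminate the degenerate (``broken wave'') alternative using the hypotheses (H3) and (H4). Since the preceding Proposition shows that $Q=(Q_1,Q_2)$ satisfies (A1)--(A5) with the two $L$-periodic fixed points $0$ and $\beta=(p^*,q^*)$, Theorem A applies directly. The non-existence assertion is then immediate: part (3) of Theorem A gives that for any $c<\overline{c}_+$ there is no $L$-periodic traveling wave of $Q$ connecting $\beta$ to $0$, which is exactly the non-existence half of the claim.

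For existence, fix $c\ge\overline{c}_+$. First I would record that $0$ is an isolated equilibrium of $Q$ in $\mathcal{C}^{per}_{\beta}$, as required by Theorem A(2). Reverting to the original variables via $u=p$, $v=q^*-q$, every $L$-periodic equilibrium of $Q$ in $\mathcal{C}^{per}_{\beta}$ corresponds to an $L$-periodic steady state of \eqref{VL} lying between $E_1$ and $E_2$ in the competitive order. By (H1) the only semi-trivial such states are $E_0,E_1,E_2$, and (H3) excludes any state in $\mathrm{Int}(\mathbb{P}_+)$; hence the only periodic equilibria of $Q$ between $0$ and $\beta$ are $0=\hat E_2$, $\beta=\hat E_1$, and the intermediate state $\hat E_0=(0,q^*)$, which are mutually distinct and therefore isolated. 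Theorem A(2) then yields, for this $c$, either (i) a single $L$-periodic rightward traveling wave connecting $\beta$ to $0$ with monotone profile, or (ii) a pair of ordered intermediate equilibria $\alpha_1,\alpha_2\in\mathcal{C}^{per}_{\beta}\backslash\{0,\beta\}$ carrying waves $W_1$ connecting $\alpha_1$ to $0$ and $W_2$ connecting $\beta$ to $\alpha_2$. Conclusion (i) is precisely the desired existence statement, so the whole theorem reduces to ruling out alternative (ii).

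To discard (ii), I would use that the only intermediate periodic equilibrium is $\hat E_0$, forcing $\alpha_1=\alpha_2=\hat E_0=(0,q^*)$. The two component waves then collapse to single-species problems by monotonicity: along $W_2$ the second component is nonincreasing in $\xi$ with limits $q^*$ at both ends, hence it is identically $q^*$ (i.e. $q\equiv0$), so the first component is a monotone rightward traveling wave of the single-species equation \eqref{u1} connecting $p^*$ to $0$; likewise, along $W_1$ the first component is identically $0$, and in the variable $q=q^*-v$ the second component is a monotone front of the single-species equation \eqref{u2} with $q\equiv0$ on the left and $q\equiv q^*$ on the right. Existence of $W_2$ therefore forces $c\ge c^*_{1+}$, while existence of $W_1$, being a leftward-oriented invasion front of species $2$ (the stable state $q^*$ sitting on the right), forces $c\le -c^*_{2-}$ by the non-existence part of the scalar periodic spreading-speed theory. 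Combining these gives $c^*_{1+}\le c\le -c^*_{2-}$, i.e. $c^*_{1+}+c^*_{2-}\le0$, contradicting (H4). Hence alternative (ii) is impossible and conclusion (i) holds for every $c\ge\overline{c}_+$.

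The main obstacle is the last paragraph: correctly reducing the two vector-valued component waves to scalar single-species waves and, above all, pinning down the direction of propagation so that the relevant thresholds are $c^*_{1+}$ (rightward, species $1$) and $c^*_{2-}$ (leftward, species $2$). The sign bookkeeping induced by the transformation $v=q^*-q$ is delicate, and the contradiction hinges entirely on matching these thresholds against the single sign condition (H4); reversing either orientation would render the two speed inequalities compatible and destroy the argument. A secondary technical point is justifying that the monotone components with equal limits are genuinely constant in $\xi$ for each fixed $x$, which relies on the strong monotonicity and strong positivity of $Q$ guaranteed by (K2(i)--(iv)).
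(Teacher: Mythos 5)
Your proof is correct and follows essentially the same route as the paper's: invoke Theorem A, use (H1)--(H3) to force $\alpha_1=\alpha_2=\hat E_0=(0,q^*)$ in alternative (2)(ii), collapse the two component waves to scalar fronts of \eqref{u1} and \eqref{u2} via monotonicity of the profiles, and conclude $c\ge c^*_{1+}$ together with $-c\ge c^*_{2-}$, so that $c^*_{1+}+c^*_{2-}\le 0$ contradicts (H4). Your explicit check that $0$ is an isolated equilibrium of $Q$ in $\mathcal{C}^{per}_{\beta}$ (needed to apply Theorem A(2)) is a detail the paper leaves implicit; otherwise the two arguments coincide.
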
 
\begin{proof}
	By Theorem A (2) and (3), it suffices to exclude the second case in Theorem A (2). Suppose, by contradiction, the statement in Theorem A (2(ii)) is valid for some $c\ge\overline{c}_+$. Since system \eqref{NModel} has exactly three $L$-periodic nonnegative steady states and $\hat E_0=(0,q^*(x))$ is the only intermediate equilibrium between $\hat E_1=\beta$ and $\hat E_2=0$, we have $\alpha_1=\alpha_2=\hat E_0$. Hence, by restricting  system \eqref{NModel} on the order interval $[\hat{E}_0,\hat{E}_1]$ and $[\hat{E}_2,\hat{E}_0]$, respectively, we find that one scalar equation
	\begin{equation}\label{u1}
	u_{n+1}(x)= \int_{\mathbb{R}}\frac{r_1(y)}{1+b_1(y)u_n(y)}u_n(y)k_1(x,y)\di y,
	\end{equation}
	admits an $L$-periodic traveling wave $U(x-cn,x)$ connecting $p^*(x)$ to $0$ with $U(\xi,x)$ being continuous and nonincreasing in $\xi$, and the other scalar equation 
	\begin{equation}
	v_{n+1}(x)=
	\int_{\mathbb{R}}\frac{r_2(y)}{1+b_2(y)q^*(y)}\cdot \frac{v_n(y)}{1+b_2(y)(q^*(y)-v_n(y))}k_2(x,y)\di y,
	\end{equation}
	also admits an $L$-periodic traveling wave $V(x-cn,x)$ connecting $q^*(x)$
	to $0$ with $V(\xi,x)$ being continuous and nonincreasing in $\xi$.
	
	Let $W(x-cn,x)=q^*(x)-V(x-cn,x)$. Then $W(x-cn,x)$ is an $L$-periodic traveling wave connecting $0$ to $q^*(x)$  of the following scalar equation with $W(\xi,x)$ being continuous and nondecreasing in $\xi$
	\begin{equation}\label{u2}
	w_{n+1}(x)= \int_{\mathbb{R}}\frac{r_2(y)}{1+b_2(y)w_n(y)}w_n(y)k_2(x,y)\di y.
	\end{equation}
	Note that $W(x-cn,x)$ is an $L$-periodic leftward traveling wave connecting $0$ to $q^*$ with wave speed $-c$, and that systems \eqref{u1} and \eqref{u2} admit rightward spreading speed $c^*_{1+}$ and leftward spreading speed $c^*_{2-}$, respectively, which are also the rightward and the leftward minimal wave speeds (see, e.g., \cite[Theorems 5.2 and 5.3]{Liang2}). It then follows that $c\ge c^*_{1+}$ and  $-c\ge c^*_{2-}$. This implies that $c^*_{1+}+c^*_{2-}\leqslant 0$, a contradiction.
\end{proof}
Let $\lambda_2(\mu)$ be the principle eigenvalue of the elliptic eigenvalue problem: 
\begin{small}\begin{align}\label{eep2}
	&\lambda \psi(x)= \int_{\mathbb{R}}\frac{r_2(y)}{1+b_2(y)q^*(y)}e^{-\mu(x-y)}\psi(y)k_2(x,y)\di y,\nonumber\\
	&\psi(x+L)=\psi(x),\quad x\in\mathbb{R}.\end{align} \end{small} 
In order to prove that system \eqref{NModel} admits a single rightward spreading speed, we impose the following assumption:
\begin{enumerate}
	
	\item[(H5)] $\limsup\limits_{\mu\to 0^+}\displaystyle\frac{\ln\lambda _2(\mu)}{\mu}\leqslant c_{1+}^*$, where $c_{1+}^*$ is the rightward spreading speed of \eqref{u1}.
\end{enumerate} 
\begin{theorem}\label{Qspreading}
	Assume that (K1)--(K2), and (H1)--(H5) hold. Then the following statements are valid for system \eqref{NModel}:
	\begin{enumerate}
		\item[(i)]If $\phi\in\mathcal{C}_{\beta}$, $0\leqslant \phi\leqslant \omega\ll \beta$ for some $\omega\in \mathcal{C}^{per}_{\beta}$, and $\phi(x)=0, \forall x\ge H$, for some $H\in \mathbb{R}$, then $\lim\limits_{n\rightarrow\infty,x\ge cn}(u_n(x,\phi),v_n(x,\phi))=(0,0)$ for any $c>\overline{c}_+$.
		\item[(ii)]If $\phi\in\mathcal{C}_{\beta}$ and $\phi(x)\ge \sigma$, $\forall x\leqslant K$, for some $\sigma\in \mathbb{R}^2$ with $\sigma\gg0$ and $K\in\mathbb{R}$, then $\lim\limits_{n\rightarrow\infty,x\leqslant cn}((u_n(x,\phi),v_n(x,\phi))-\beta(x))=0$ for any $c<\overline{c}_+$.
	\end{enumerate}
\end{theorem}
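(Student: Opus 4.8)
The plan is to recognize that Theorem \ref{Qspreading} is, up to a single point, a restatement of Theorem B, and to isolate that point as the single-speed identity $c^*_+=\overline c_+$. Statement (i) is \emph{verbatim} Theorem B(i). Statement (ii) agrees with Theorem B(ii) except that the threshold is $\overline c_+$ instead of $c^*_+$; since Theorem B(ii) already gives convergence to $\beta$ for every $c<c^*_+$, and since $c^*_+\le\overline c_+$ always holds by the monotonicity of $a(c;\cdot)$ in $c$, it suffices to prove the reverse inequality $\overline c_+\le c^*_+$ and then quote Theorem B(ii). Thus the entire content beyond the cited results is the identity $c^*_+=\overline c_+$.

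I would argue by contradiction, assuming $c^*_+<\overline c_+$ and fixing $c\in(c^*_+,\overline c_+)$. By the definitions in \eqref{defc}, $a(c,+\infty)$ is then a fixed point with $0<a(c,+\infty)<\beta$, hence equal to the unique intermediate equilibrium $\hat E_0=(0,q^*)$. By Theorem A(1) this gives a monotone rightward wave connecting $\beta$ to $\hat E_0$ at speed $c$; its $v$-component is nonincreasing and equals $q^*$ at both ends, so $v\equiv q^*$, which annihilates the term $a_1(q^*-v)$ and forces the $u$-component to be a nonincreasing traveling wave of the scalar equation \eqref{u1} connecting $p^*$ to $0$. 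As \eqref{u1} is of Fisher--KPP (Beverton--Holt) type, its minimal wave speed is $c^*_{1+}$, so $c\ge c^*_{1+}$; letting $c\downarrow c^*_+$ yields $c^*_+\ge c^*_{1+}$, and therefore $c^*_{1+}<\overline c_+$.

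The decisive step is to bound $\overline c_+$ from above by $c^*_{1+}$, contradicting the previous line. Here I would construct a linear supersolution for the full system \eqref{NModel} on $\mathcal C_\beta$: since $u\ge0$ and $v\le q^*$ make both denominators in $Q_1,Q_2$ at least $1$, one has $Q_1[u,v](x)\le\int_{\mathbb R}r_1(y)u(y)k_1(x,y)\,\di y$, while the elementary estimate $\tfrac{b_2a_2q^*u+v}{1+b_2(q^*+a_2u-v)}\le b_2a_2q^*u+v$ gives
\[
Q_2[u,v](x)\le\int_{\mathbb R}\frac{r_2(y)b_2(y)a_2(y)q^*(y)}{1+b_2(y)q^*(y)}u(y)k_2(x,y)\,\di y+\int_{\mathbb R}\frac{r_2(y)}{1+b_2(y)q^*(y)}v(y)k_2(x,y)\,\di y.
\]
This majorant is a cooperative, lower-triangular linear recursion whose $u$-block has spreading speed $c^*_{1+}$ and whose $v$-block is governed precisely by the eigenvalue $\lambda_2(\mu)$ of \eqref{eep2}. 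Its spreading speed is $\max\{c^*_{1+},\,\inf_{\mu>0}\mu^{-1}\ln\lambda_2(\mu)\}$, and since $\lambda_2(0)=1$ with eigenfunction $q^*$, assumption (H5) gives $\inf_{\mu>0}\mu^{-1}\ln\lambda_2(\mu)\le\limsup_{\mu\to0^+}\mu^{-1}\ln\lambda_2(\mu)\le c^*_{1+}$, so this speed is exactly $c^*_{1+}$. Hence for every $c>c^*_{1+}$ the supersolution tends to $0$ ahead of $cn$, forcing $a(c,+\infty)=0$ and thus $\overline c_+\le c^*_{1+}$. Combined with $c^*_{1+}<\overline c_+$ this is the sought contradiction, so $c^*_+=\overline c_+$.

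I expect the main obstacle to be the rigorous passage from these pointwise differential inequalities to the spreading-speed statement for the linear majorant. Concretely, one must verify that the lower-triangular linear system propagates at the \emph{maximum} of its two block speeds—so that the driven $v$-block, controlled through $\lambda_2(\mu)$ by (H5), does not generate a front faster than $c^*_{1+}$—and then transfer the ahead-decay of this majorant to $a(c;\cdot)$ in the compact-open topology underlying \eqref{defc} via the comparison principle. Everything else (the reduction to $c^*_+=\overline c_+$, the minimal-speed input for \eqref{u1}, and the final invocation of Theorem B) is routine once this upper bound is in place.
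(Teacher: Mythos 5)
Your reduction to the single identity $c^*_+=\overline c_+$ and the first half of your contradiction argument (a wave with speed in $(c^*_+,\overline c_+)$ must connect $\beta$ to the unique intermediate equilibrium $\hat E_0=(0,q^*)$, whence $c^*_+\ge c^*_{1+}$) coincide with the paper's proof. The gap is in your ``decisive step'': it is \emph{false in general} that a lower-triangular cooperative linear recursion spreads at the maximum of its two block speeds. This is precisely the phenomenon of anomalous spreading speeds for cooperative recursions (Weinberger, Lewis and Li, \emph{Anomalous spreading speeds of cooperative recursion systems}, J. Math. Biol. 55 (2007), 207--222). For your majorant, the best estimate for compactly supported data is $u_j\lesssim e^{-\mu x}\lambda_1(\mu)^j$ for each admissible $\mu$, and feeding this into the Duhamel series for the $v$-block gives only $v_n\lesssim n\,e^{-\mu x}\max\{\lambda_1(\mu),\lambda_2(\mu)\}^n$; hence the $v$-component is controlled only ahead of the speed $\inf_{\mu>0}\mu^{-1}\ln\max\{\lambda_1(\mu),\lambda_2(\mu)\}$, and this infimum of a maximum can strictly exceed $\max\{c^*_{1+},\,\inf_{\mu>0}\mu^{-1}\ln\lambda_2(\mu)\}$ because the two curves are small at different values of $\mu$. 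Crucially, (H5) does not exclude this: since $\lambda_2(0)=1$ and $\ln\lambda_2$ is convex, (H5) constrains only the slope of $\ln\lambda_2$ at $\mu=0^+$, whereas the anomalous mode is generated at the decay rate $\mu_1$ of the $u$-front, where $\lambda_2(\mu_1)$ may exceed $\lambda_1(\mu_1)$. Concretely, in the spatially homogeneous case with Gaussian kernels one has $\ln\lambda_1(\mu)=\ln r_1+\sigma_1^2\mu^2/2$ and $\ln\lambda_2(\mu)=\sigma_2^2\mu^2/2$ (note $r_2/(1+b_2q^*)\equiv1$ there); (H5) holds since the limit in it is $0$, yet for $\sigma_2^2>2\sigma_1^2$ the inf-of-max speed is strictly larger than $c^*_{1+}$. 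So $\overline c_+\le c^*_{1+}$ does not follow from your majorant (and indeed the paper never asserts this inequality under (H1)--(H5)); the contradiction collapses.

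The paper's proof repairs exactly this point by replacing the one-rate linear comparison with a \emph{two-rate} nonlinear upper solution travelling at a common speed $c_1\in(c^*_+,\overline c_+)$: namely $\overline u_n=\min\{h_1e^{-\mu_1(x-c_1n)}\phi_1^*,\,p^*\}$ with $\ln\lambda_1(\mu_1)=\mu_1c_1$, and $\overline v_n=\min\{h_2e^{-\mu_2(x-c_1n)}\phi_2^*,\,q^*\}$ with $\mu_2\in(0,\mu_1)$ taken \emph{small}. Hypothesis (H5) is used exactly once: to produce such a $\mu_2$ with $\ln\lambda_2(\mu_2)/\mu_2<c_1$, so that the shallow profile is a supersolution of the linear $v$-equation \eqref{Eq1} at speed $c_1$. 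The shallow rate $\mu_2<\mu_1$ then forces $\overline v_n\ge a_2\overline u_n$ ahead of the front (after fixing $h_1$ small), and the coupling term in $Q_2$ carries the factor $(q^*-\overline v_n)(\overline v_n-a_2\overline u_n)\ge0$, so the forcing by $u$ can never accelerate $\overline v$ --- this sign structure is what substitutes for your max-of-block-speeds claim. Finally, the contradiction is obtained not from an inequality between $\overline c_+$ and $c^*_{1+}$, but from the persistence property: for $\hat c\in(c_1,\overline c_+)$ one has $\liminf_{n\to\infty,\,x\le\hat cn}|U_n(x,\phi)|\ge\delta(\hat c)>0$ (by \cite[Lemma 2.2]{WLL2002} and \cite[Theorem 5.4]{Yu}), while comparison with $\overline U_n(\cdot-A)$ forces $U_n(\hat cn,\phi)\to0$. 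If you want to retain a comparison argument, it must be built with component-dependent decay rates in this way; the single-rate or block-speed shortcut is unavailable.
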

\begin{proof}
	By Theorem B, it suffices to show $\overline{c}_+=c^*_+$. If this is not valid, then the definition of $\overline{c}_+$ and $c^*_+$ implies that $\overline{c}_+>c^*_+$. By Theorem A (1) and (3), it follows that system \eqref{NModel} admits an $L$-periodic traveling wave $(U(x-c^*_+n,x),V(x-c^*_+n,x))$ connecting $(p^*(x),q^*(x))$ to $(0,q^*(x))$ with $U(\xi,x)$ and $V(\xi,x)$ being continuous and nonincreasing in $\xi$. Therefore, $V\equiv q^*(x)$, and $U_1(x-c^*_+n,x)$ is an $L$-periodic traveling wave connecting $p^*(x)$ to $0$. This implies that $c^*_+\ge c^*_{1+}$ where $c^*_{1+}$ is the rightward spreading of \eqref{u1}. By \cite[(2.7)]{Wein2008}, it follows that $c^*_{1+}=\inf_{\mu>0}\frac{\ln\lambda_1(\mu)}{\mu}$, where $\lambda_1(\mu)$ is the principal eigenvalue of the following eigenvalue problem:
	\begin{eqnarray}\label{eep}
	& & \lambda \psi(x)= \int_{\mathbb{R}}r_1(y)e^{-\mu(y-x)}\psi(y)k_1(x,y)\di y, \nonumber\\
	& &\psi(x+L)=\psi(x),\quad x\in\mathbb{R}.
	\end{eqnarray}
	For any given $c_1\in(c^*_+,\overline{c}_+)$, there exists $\mu_1>0$ such that $c_1=\frac{\ln\lambda_1(\mu_1)}{\mu_1}$. Let $\phi^*_1(x)$ be the $L$-periodic positive
	eigenfunction associated with the principal eigenvalue $\lambda_1(\mu_1)$ of \eqref{eep}. It then easily follows that 
	$$u_n(x):=e^{-\mu_1(x-c_1n)}\phi^*_1(x)=e^{-\mu_1x}\phi^*_1(x)[\lambda_1(\mu_1)]^n,\quad n\ge0,\ x\in\mathbb{R},$$
	is a solution of the linear equation
	$$
	u_{n+1}(x)= \int_{\mathbb{R}}r_1(y)u_n(y)k_1(x,y)\di y.
	$$
	Since $c^*_1<c_1$ and (H5) holds,  we can choose a
	small number $\mu_2\in (0, \mu_1)$ such that  $c_2:=\frac{\ln\lambda_2(\mu_2)}{\mu_2}< c_1$.
	Let $\phi_2^*(x)$ be the positive
	eigenfunction associated with the principal eigenvalue $\lambda_2(\mu_2)$ of \eqref{eep2}.
	It is easy to see that
	$$
	v_n(x):=e^{-\mu_2(x-c_2n)}\phi^*_2(x)=e^{-\mu_2x}\phi^*_2(x)[\lambda_2(\mu_2)]^n
	$$
	is a solution of the linear equation
	\begin{equation}\label{Eq1}
	v_{n+1}(x)= \int_{\mathbb{R}}\frac{r_2(y)}{1+b_2(y)q^*(y)}v_n(y)k_2(x,y)\di y.
	\end{equation}
	Since $c_1>c_2$, it follows that the function
	$$
	\tilde{v}_n(x):=e^{-\mu_2(x-c_1n)}\phi^*_2(x)=
	e^{\mu_2(c_1-c_2)n}v_n(x),\quad n\ge0,\ x\in\mathbb{R},
	$$
	satisfies
	\begin{equation}\label{ineq}
	\tilde{v}_{n+1}(x)\ge \int_{\mathbb{R}}\frac{r_2(y)}{1+b_2(y)q^*(y)}\tilde{v}_n(y)k_2(x,y)\di y.
	\end{equation}
	
	Define the following two functions: 
	\begin{equation}
	\overline{u}_n(x):=\min\{h_1e^{-\mu_1(x-c_1n)}\phi^*_1(x),p^*(x)\},\quad n\ge0,\ x\in \mathbb{R},
	\end{equation} and
	\begin{equation}
	\overline{v}_n(x):=\min\{h_2e^{-\mu_2(x-c_1n)}\phi^*_2(x),q^*(x)\},\quad n\ge0,\ x\in\mathbb{R},
	\end{equation}
	where
	$$h_2:= \max_{x\in[0,L]}\frac{q^*(x)}{\phi^*_2(x)}>0,\quad h_1:=\min_{x\in[0,L]}\frac{h_1\phi^*_2(x)}{b_{2}(x)\phi^*_1(x)}>0.$$
	Now we want to verify that $(\overline{u}_n,\overline{u}_n)$ is an upper solution for system \eqref{NModel}. For all $x-c_1 n>\frac{1}{\mu_1}\ln\frac{h_1\phi_1^*(x)}{p^*(x)}$, we have $\overline{u}_n(x)=h_1e^{-\mu_1(x-c_1n)}\phi^*_1(x)$, and therefore,
	\begin{eqnarray*}
		& &\quad\overline{u}_{n+1}(x)-Q_1[\overline{u}_{n},\overline{v}_{n}](x)\\
		& &
		=\int_{\mathbb{R}}\frac{r_1(y)b_1(y)\overline{u}_n(y)[\overline{u}_n(y)+a_1(y)(q^*(y)-\overline{v}_n(y))]}{1+b_1(y)(\overline{u}_n(y)+a_1(y)(q^*(y)-\overline{v}_n(y))}k_1(x,y)\di y\geqslant 0.
	\end{eqnarray*}
	For all $x-c_1n<\frac{1}{\mu_1}\frac{h_1\phi_1^*(x)}{p^*(x)}$, we obtain $\overline{u}_n(x)=p^*(x)$, and hence,
	\begin{eqnarray*}
		& &\quad\overline{u}_{n+1}(x)-Q_1[\overline{u}_{n},\overline{v}_{n}](x)\\
		& &=\int_{\mathbb{R}}\frac{r_1(y)b_1(y)a_1(y)p^*(y)[q^*(y)-\overline{v}_n(y)]}{[1+b_1(y)p^*(y)][1+b_1(y)(p^*(y)+a_1(y)(q^*(y)-\overline{v}_n(y))]}k_1(x,y)\di yy\geqslant 0.
	\end{eqnarray*}
	On the other hand, for all $x-c_1n\!>\!\frac{1}{\mu_2}\ln\frac{h_2\phi_2^*(x)}{q^*(x)}\!\geqslant \!0$, it follows that  $$\overline{v}_n(x)=h_2e^{-\mu_2(x-c_1n)}\phi^*_2(x),$$ which satisfies inequality  \eqref{ineq}. Note that $$\overline{u}_n(x)\leqslant h_1e^{-\mu_1(x-c_1n)}\phi^*_1(x), \quad \forall t\ge0,\ x\in\mathbb{R},$$ and $\mu_2\in(0,\mu_1)$, we have 
	\begin{eqnarray*}
		& &\quad\overline{v}_{n+1}(x)-Q_2[\overline{u}_{n},\overline{v}_{n}](x)\\
		& &
		= \int_{\mathbb{R}}\frac{r_2(y)b_2(y)}{1+b_2(y)q^*(y)}\cdot \frac{(q^*(y)-\overline{v}_n(y)(\overline{v}_n(y)-b_2(y)\overline{u}_n(y))}{1+b_2(y)(q^*(y)+a_2(y)\overline{u}_n(y)-\overline{v}_n(y))}k_2(x,y)\di yy\geqslant 0,
	\end{eqnarray*}
	where $\overline{v}_n-a_2\overline{u}_n=e^{-\mu_1(x-c_1n)}(h_2\phi^*_2-b_2h_1\phi^*_1)\ge0.$
	
	For all $x-c_1n<\frac{1}{\mu_2}\ln\frac{h_2\phi_2^*(x)}{q^*(x)}$, we have $\overline{v}_n(x)=q^*(x)$. Therefore,
	\begin{eqnarray*}
		& &\quad\overline{v}_{n+1}(x)-Q_2[\overline{u}_{n},\overline{v}_{n}](x)\\
		& &
		= \int_{\mathbb{R}}\frac{r_2(y)q^*(y)}{1+b_2(y)q^*(y)}\big[1- \frac{1+b_2(y)a_2(y)\overline{u}_n(y)}{1+b_2(y)a_2(y)\overline{u}_n(y)}\big]k_2(x,y)\di y=0,
	\end{eqnarray*}
	It then follows that $\overline{U}_n:=(\overline{u}_n,\overline{v}_n)$ is a continuous upper  solution of system \eqref{NModel}.
	\smallskip
	
	Let $\phi\in \mathcal{C}_\beta$  with $\phi(x)\ge \sigma$, $\forall x\leqslant K$ and $\phi(x)=0$, $\forall x\ge H$, for some $\sigma\in \mathbb{R}^2$ with $\sigma\gg0$ and $K,H\in\mathbb{R}$. By the arguments in \cite[Lemma 2.2]{WLL2002} and \cite[Theorem 5.4]{Yu}, it follows that for any $c<\overline{c}_+$, there exists $\delta(c)>0$ such that 
	\begin{equation}\label{ineq2}{\lim\inf}_{n\rightarrow\infty, x\leqslant cn}|U_n(x,\phi)|\ge\delta(c)>0.\end{equation} Moreover, there exists a sufficiently large positive constant $A\in L\mathbb{Z}$ such that 
	$$\phi(x)\leqslant\overline{U}_0(x-A) :=\psi(x),\quad \forall x\in \mathbb{R}.$$
	By the translation invariance of $Q$, it follows that $\overline{U}_n(x-A)=(\overline{u}_n(x-A),\overline{v}_n(x-A))$ is still an upper solution of system \eqref{NModel}, and hence for $U_n$, we have
	\begin{equation}\label{ineq3}0\leqslant U_n(x,\phi)\leqslant U_n(x,\psi)=\overline{U}_n(x-A),\quad \forall x\in\mathbb{R},\  n\ge0.\end{equation}
	Fix a number $\hat c\in(c_1,\overline{c}_+)$. Letting $x=\hat cn$ and $n\rightarrow\infty$ in \eqref{ineq3}, together with \eqref{ineq2}, we have 
	$$0<\delta(\hat c)\leqslant\liminf_{n\rightarrow\infty}|U_n(\hat{c}n,\phi)|\leqslant\lim_{n\rightarrow\infty}|\overline{U}_n(\hat cn-A)|=0,$$  
	which is a contradiction. Thus, we have $c^*_+=\overline{c}_+$.
\end{proof}


To finish this section, we present some results on the principle eigenvalue problem.
\begin{proposition}\label{rem}
	Let $\lambda_m(\mu)(\mu \in [0,\mu^*))$ be the principle eigenvalue of the following  eigenvalue problem:
	\begin{eqnarray}\label{geep}
	& & \lambda \psi=\int_{\mathbb{R}}m(y)e^{-\mu(y-x)}\psi(y)k(x,y)\di y,\\
	& &\psi(x+L)=\psi(x),\quad x\in\mathbb{R}. \nonumber
	\end{eqnarray}
	Then the following statements are valid:
	\begin{enumerate}
		\item[(i)]If $m_1(x)\geqslant m_2(x)>0$ with $m_1(x)\not\equiv m_2(x),\forall x\in\mathbb{R}$, then $\lambda_{m_1}(\mu)>\lambda_{m_2}(\mu)$.
		\item[(ii)] $\ln\lambda_m(\mu)$ is a convex function of $\mu$ on $\mathbb{R}$.
		\item[(iii)]If $k(x,y)=k(y,x)$, then $\lambda_m(\mu)=\lambda_m(-\mu)$.
	\end{enumerate}
\end{proposition}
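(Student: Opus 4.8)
The plan is to treat \eqref{geep} as the spectral problem for the bounded linear operator $\mathcal{L}_\mu$ on the space $Y$ of continuous $L$-periodic functions, defined by $\mathcal{L}_\mu\psi(x)=\int_{\mathbb{R}}m(y)e^{-\mu(y-x)}\psi(y)k(x,y)\,\di y$. Exactly as for $\check{L}$ in Section 2, assumptions (K1)--(K2) ensure that $(\mathcal{L}_\mu)^{\eta}$ is compact and strongly positive, so by the Krein--Rutman theorem $\lambda_m(\mu)=\rho(\mathcal{L}_\mu)$ is a simple principal eigenvalue with a strongly positive $L$-periodic eigenfunction $\psi_\mu$; moreover the adjoint $\mathcal{L}_\mu^{*}$, taken with respect to the pairing $\langle f,g\rangle=\int_0^L fg\,\di x$ after folding the $y$-integral onto one period, shares the same principal eigenvalue with a strongly positive eigenfunction $\psi_\mu^{*}$. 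All three assertions then follow from this framework combined with the Collatz--Wielandt / adjoint-pairing characterization of $\rho(\mathcal{L}_\mu)$.

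For (i) I would compare the operators $\mathcal{L}_\mu^{(1)}$ and $\mathcal{L}_\mu^{(2)}$ built from $m_1$ and $m_2$. Pairing $\mathcal{L}_\mu^{(1)}\psi_1=\lambda_{m_1}(\mu)\psi_1$ against the adjoint eigenfunction $\psi_2^{*}$ of $\mathcal{L}_\mu^{(2)}$ and subtracting the identity $\langle\mathcal{L}_\mu^{(2)}\psi_1,\psi_2^{*}\rangle=\lambda_{m_2}(\mu)\langle\psi_1,\psi_2^{*}\rangle$ gives $(\lambda_{m_1}(\mu)-\lambda_{m_2}(\mu))\langle\psi_1,\psi_2^{*}\rangle=\langle(\mathcal{L}_\mu^{(1)}-\mathcal{L}_\mu^{(2)})\psi_1,\psi_2^{*}\rangle$. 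The operator $\mathcal{L}_\mu^{(1)}-\mathcal{L}_\mu^{(2)}$ has the nonnegative kernel $(m_1(y)-m_2(y))e^{-\mu(y-x)}k(x,y)$, which is not identically zero because $m_1\not\equiv m_2$ and $k$ is nontrivial (recall $\int_{\mathbb{R}}k(x,y)\,\di x=1$). Since $\psi_1,\psi_2^{*}\gg0$, the right-hand side is strictly positive while $\langle\psi_1,\psi_2^{*}\rangle>0$, which forces $\lambda_{m_1}(\mu)>\lambda_{m_2}(\mu)$.

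For (ii) I would fix $\mu=\theta\mu_1+(1-\theta)\mu_2$ with $\theta\in(0,1)$ and test with $\psi:=\psi_{\mu_1}^{\theta}\psi_{\mu_2}^{1-\theta}\gg0$. Splitting $e^{-\mu(y-x)}=e^{-\theta\mu_1(y-x)}e^{-(1-\theta)\mu_2(y-x)}$ lets me write the integrand as $f(y)^{\theta}g(y)^{1-\theta}$, where $f(y)=m(y)e^{-\mu_1(y-x)}\psi_{\mu_1}(y)k(x,y)$ and $g(y)=m(y)e^{-\mu_2(y-x)}\psi_{\mu_2}(y)k(x,y)$, so H\"older's inequality yields $\mathcal{L}_\mu\psi(x)\le(\mathcal{L}_{\mu_1}\psi_{\mu_1}(x))^{\theta}(\mathcal{L}_{\mu_2}\psi_{\mu_2}(x))^{1-\theta}=\lambda_m(\mu_1)^{\theta}\lambda_m(\mu_2)^{1-\theta}\psi(x)$. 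Pairing this inequality with $\psi_\mu^{*}$ and using $\langle\mathcal{L}_\mu\psi,\psi_\mu^{*}\rangle=\lambda_m(\mu)\langle\psi,\psi_\mu^{*}\rangle$ gives $\lambda_m(\mu)\le\lambda_m(\mu_1)^{\theta}\lambda_m(\mu_2)^{1-\theta}$, which is exactly the convexity of $\ln\lambda_m(\mu)$.

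For (iii), under $k(x,y)=k(y,x)$, I would first fold the $y$-integral onto $[0,L]$, giving $\mathcal{L}_\mu$ the kernel $\mathcal{K}_\mu(x,z)=m(z)\sum_{n\in\mathbb{Z}}e^{-\mu(z+nL-x)}k(x,z+nL)$. A direct computation of the adjoint kernel $\mathcal{K}_\mu(z,x)$, combined with the symmetry $k(x,y)=k(y,x)$ and the periodicity relation $k(x+nL,z)=k(x,z-nL)$ from (K2(i)), shows after reindexing the lattice sum that $\mathcal{K}_\mu(z,x)=\frac{m(x)}{m(z)}\mathcal{K}_{-\mu}(x,z)$; equivalently $\mathcal{L}_\mu^{*}=M_m\,\mathcal{L}_{-\mu}\,M_m^{-1}$, where $M_m$ denotes multiplication by the strictly positive periodic function $m$. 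Since $M_m$ is an invertible bounded operator, $\mathcal{L}_\mu^{*}$ and $\mathcal{L}_{-\mu}$ are similar and therefore have equal spectral radii; together with $\rho(\mathcal{L}_\mu^{*})=\rho(\mathcal{L}_\mu)$ this yields $\lambda_m(\mu)=\rho(\mathcal{L}_\mu)=\rho(\mathcal{L}_{-\mu})=\lambda_m(-\mu)$. I expect this last part to be the main obstacle, since the conclusion hinges on carefully matching the reindexed lattice sums in the folded adjoint kernel with those of $\mathcal{L}_{-\mu}$; once the conjugation identity $\mathcal{L}_\mu^{*}=M_m\,\mathcal{L}_{-\mu}\,M_m^{-1}$ is established the spectral conclusion is immediate.
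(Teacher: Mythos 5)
Your proposal is correct, but it takes a genuinely different route from the paper's. The paper proves (i) by adapting the comparison argument of \cite[Lemma 15.5]{Hess}: assuming $\lambda_{m_1}(\mu)\leqslant\lambda_{m_2}(\mu)$, it scales the eigenfunctions so that $0<\psi_2\ll\psi_1$, derives $\check{L}_{m_2}(\psi_1-\psi_2)<\lambda_{m_1}(\psi_1-\psi_2)$, and concludes that $\psi_1-\psi_2$ would be a positive solution of an equation that \cite[Theorem 7.2]{Hess} forbids; for (ii) and (iii) the paper gives no computation at all, citing \cite[Lemma 3.7]{Liang} and \cite[Theorem 2.3]{Ding13} respectively. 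You replace all of this with a single adjoint-pairing framework: the strict inequality in (i) is read off from $(\lambda_{m_1}(\mu)-\lambda_{m_2}(\mu))\langle\psi_1,\psi_2^{*}\rangle=\langle(\mathcal{L}_\mu^{(1)}-\mathcal{L}_\mu^{(2)})\psi_1,\psi_2^{*}\rangle>0$; log-convexity in (ii) follows from H\"older's inequality applied to the test function $\psi_{\mu_1}^{\theta}\psi_{\mu_2}^{1-\theta}$ (essentially the argument behind the cited Lemma 3.7 of \cite{Liang}, so here you are reconstructing the reference rather than diverging from it); and the symmetry in (iii) comes from the folded-kernel conjugation $\mathcal{L}_\mu^{*}=M_m\,\mathcal{L}_{-\mu}\,M_m^{-1}$, where $M_m$ is multiplication by $m$ --- a kernel computation that checks out, since $k(z,x+nL)=k(x+nL,z)=k(x,z-nL)$ by symmetry and (K2(i)), and reindexing $n\to-n$ produces exactly $\frac{m(x)}{m(z)}\mathcal{K}_{-\mu}(x,z)$. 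What your route buys is self-containedness and uniformity (everything reduces to Krein--Rutman plus one duality pairing), and in (iii) an explicit identity where the paper only points to a reference; what it costs is that you must justify two standing facts that the paper's citations package away: that the folded adjoint operator has a strictly positive principal eigenfunction (or eigenfunctional) with the same eigenvalue --- Krein--Rutman applied to the strongly positive compact iterate $(\mathcal{L}_\mu)^{\eta}$ does give this, but it should be stated --- and that the lattice sums defining $\mathcal{K}_{\pm\mu}$ converge and may be interchanged with the integral over one period, which is where (K2(vi)) enters and where the definition of $\lambda_m(-\mu)$ implicitly requires the abscissa of convergence to cover $-\mu$ as well. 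Neither point is a gap; both are standard, but they are the places where a referee would ask you to add a line.
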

\begin{proof}
	We use the arguments similar to those in \cite[Lemma 15.5]{Hess}  
	to prove that (a) holds. First we define
	\[\check{L}_m[\psi](x)=\int_{\mathbb{R}}m(y)e^{-\mu(y-x)}\psi(y)k(x,y)\di y.\]
	Let $\lambda_{m_1}(\mu)$, $\lambda_{m_2}(\mu)$ be principal eigenvalues with $m_1(x)\geqslant m_2(x)>0$, and $m_1(x)\not\equiv m_2(x)$. Suppose by contradiction, $\lambda_{m_1}(\mu)\leqslant\lambda_{m_2}(\mu)$. Let $\psi_1$, $\psi_2$ be associated eigenfunctions and chosen in a way that $0<\psi_2\ll \psi_1$. Then
	\[\check{L}_{m_2}(\psi_1-\psi_2)<\check{L}_{m_1}\psi_1-\check{L}_{m_2}\psi_2=\lambda_{m_1}\psi_1-\lambda_{m_2}\psi_2\leqslant \lambda_{m_1}(\psi_1-\psi_2).\]
	It follows that $\lambda_{m_1}(\psi_1-\psi_2)-\check{L}_{m_2}(\psi_1-\psi_2)=h\gg 0$, and hence, $\psi_1-\psi_2$ is a positive root of $\lambda_{m_1}\psi-\check{L}_{m_2}\psi=0$, which is a contradiction to \cite[Theorem 7.2]{Hess}, which states the above euqation has no positive solution if $\lambda_{m_1}(\mu)\leqslant\lambda_{m_2}(\mu)$.
	
	(b) follows from the same argument as in \cite[Lemma 3.7]{Liang}. (c) can be proved by the arguments similiar to those in \cite[Theorem 2.3]{Ding13}.
\end{proof}

\section{Linear determinacy of spreading speed}
In this section, we establish a set of sufficient conditions for the rightward spreading speed to be determined by the linearization of system \eqref{NModel} at $\hat E_1=(0,0)$, which is
\begin{align}\label{linear}
&u_{n+1}(x)=
\int_{\mathbb{R}}\frac{r_1(y)u_n(y)}{1+b_1(y)a_1(y)q^*(y)}k_1(x,y)\di y, \\
&v_{n+1}(x)=
\int_{\mathbb{R}}\frac{r_2(y)}{1+b_2(y)q^*(y)}\cdot \frac{b_2(y)a_2(y)q^*(y)u_n(y)+v_n(y)}{1+b_2(y)q^*(y)}k_2(x,y)\di y,\quad n>0,\  x\in\mathbb{R}\nonumber.
\end{align} 

Under (H2) the following scalar equation 
\begin{equation}\label{c0eq}
u_{n+1}(x)=
\int_{\mathbb{R}}\frac{r_1(y)u_n(y)}{1+b_1(y)(u_n(y)+a_1(y)q^*(y))}k_1(x,y)\di y,\quad n>0, x\in\mathbb{R}, \\
\end{equation}
admits a rightward spreading speed (also minimal rightward wave speed) $c^0_+=\inf\limits_{\mu>0}\frac{\ln\lambda_0(\mu)}{\mu}$ (see, e.g., \cite{Wein2008}), where $\lambda_0(\mu)$ is the principle eigenvalue of the following eigenvalue problem:
\begin{small}
	\begin{align}\label{eep0}
	&  \lambda \psi(x)= \int_{\mathbb{R}}\frac{r_1(y)}{1+b_1(y)a_1(y)q^*(y)}e^{-\mu(y-x)}\psi(y)k_1(x,y)\di y,\nonumber\\
	& \psi(x+L)=\psi(x),\quad x\in\mathbb{R}.
	\end{align}
\end{small}
The subsequent result shows that $c^0_+$ is a lower bound of the slowest spreading $c^*_+$ of system \eqref{NModel}.
\begin{proposition}\label{lb}Let (K1)--(K2), and (H1)--(H3) hold. Then $c^*_+\ge c^0_+$.
\end{proposition}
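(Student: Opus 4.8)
The plan is to identify the scalar equation \eqref{c0eq} as the first component of the cooperative system \eqref{NModel} frozen at $v\equiv0$, and then to bound the slowest spreading speed of the full system from below by the scalar spreading speed $c^0_+$. Writing $Q^0[u]:=Q_1[u,0]$, one verifies directly that \eqref{c0eq} is exactly $u_{n+1}=Q^0[u_n]$, and that $Q_1[u,v]\ge Q_1[u,0]=Q^0[u]$ whenever $0\le v\le q^*$, because increasing $v$ decreases the denominator of the integrand of $Q_1$. Rewriting the integrand of \eqref{c0eq} as $\frac{\tilde r_1(y)u}{1+\tilde b_1(y)u}$ with $\tilde r_1=r_1/(1+b_1a_1q^*)$ exhibits \eqref{c0eq} as a monostable Beverton--Holt type scalar model; since $\lambda(k_1,\tilde r_1)>1$ by (H2), Proposition \ref{VLexistence} furnishes a unique positive $L$-periodic steady state $\tilde p(x)$, with $\tilde p\ll p^*$, that is globally attracting in $Y_+\setminus\{0\}$. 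Its rightward spreading speed is the minimal wave speed $c^0_+=\inf_{\mu>0}\ln\lambda_0(\mu)/\mu$, and, in the scalar analogue of \eqref{defc}, the associated limiting profile satisfies $a^0(c,+\infty)=\tilde p$ for every $c<c^0_+$.

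Next I would run the two recursions side by side. Choose $\omega=(\omega_1,\omega_2)$ with $0\ll\omega\ll\beta$ and $\omega_1\ll\tilde p$, together with a front profile $\phi=(\phi_1,\phi_2)$ obeying (C1)--(C3). Let $a_n(c;s)$ be the iterates of $R_c$ for the full system, and let $a^0_n(c;s)$ be the iterates of the analogous operator $R^0_c[a]:=\max\{\phi_1,T_{-c}\widehat{Q^0}[a]\}$ with initial datum $\phi_1$. Because $Q_1[u,v]\ge Q_1[u,0]=Q^0[u]$ for $v\ge0$, the definition \eqref{def_Q} gives the componentwise domination $\hat Q_1[v]\ge\widehat{Q^0}[v^{(1)}]$ for every $v\in\mathcal{X}_\beta$ (whose second component is nonnegative); combined with the monotonicity of $\widehat{Q^0}$, an induction on $n$ yields $a_n(c;s)^{(1)}\ge a^0_n(c;s)$ for all $n$. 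The choice $\omega_1\ll\tilde p$ keeps the scalar iterates inside the invariant interval $[0,\tilde p]$, so the monostable scalar theory applies verbatim. Passing to the limit first in $n$ and then in $s$ gives $a(c,+\infty)^{(1)}\ge a^0(c,+\infty)$.

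To finish, fix any $c<c^0_+$. Then $a^0(c,+\infty)=\tilde p\gg0$, so $a(c,+\infty)^{(1)}\ge\tilde p\gg0$. On the other hand $a(c,+\infty)$ is an $L$-periodic steady state of \eqref{NModel}, and under (H1)--(H3) the only such states in $\mathcal{C}^{per}_\beta$ are $\hat E_2=(0,0)$, $\hat E_0=(0,q^*)$ and $\hat E_1=\beta=(p^*,q^*)$. As the first two have vanishing first component, the strict positivity of $a(c,+\infty)^{(1)}$ forces $a(c,+\infty)=\beta$. Since $c<c^0_+$ was arbitrary, $c^*_+=\sup\{c:a(c,+\infty)=\beta\}\ge c^0_+$.

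The step I expect to be most delicate is the comparison of the two recursions: one must check that the domination $\hat Q_1[v]\ge\widehat{Q^0}[v^{(1)}]$ survives the passage from $Q$ to the interval-valued map $\hat Q$ of \eqref{def_Q}, and that the scalar iterates genuinely remain in $[0,\tilde p]$ so that the known monostable characterization $a^0(c,+\infty)=\tilde p$ (for $c<c^0_+$) is available. A secondary point requiring care is the exhaustiveness of the list $\{\hat E_0,\hat E_1,\hat E_2\}$ of $L$-periodic steady states, which is exactly what (H1)--(H3) guarantee and which is essential for concluding $a(c,+\infty)=\beta$ from the positivity of its first component.
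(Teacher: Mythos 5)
Your proof is correct, but it follows a genuinely different route from the paper's. The paper splits into two cases according to whether $\overline{c}_+>c^*_+$ or $\overline{c}_+=c^*_+$: in the first case it invokes the traveling-wave existence result (Theorem A) to produce, at speed $c^*_+$, a wave of the scalar equation \eqref{u1}, which forces $c^*_+\ge c^*_{1+}$, and then the eigenvalue monotonicity of Proposition \ref{rem}(i) gives the strict inequality $c^*_{1+}>c^0_+$; in the second case it compares actual orbits, $u_n(x,\phi)\ge w_n(x,\phi_1)$ with $w_n$ solving \eqref{c0eq}, and derives a contradiction from the spreading dichotomy (Theorem \ref{Qspreading}/Theorem B) applied to both systems. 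You instead stay entirely inside the Weinberger recursion \eqref{defc} that defines $c^*_+$: the pointwise domination $Q_1[u,v]\ge Q_1[u,0]=Q^0[u]$ for $v\ge 0$ lets you sandwich the first component of the vector iterates above the scalar iterates for \eqref{c0eq}, and then the classification of the three $L$-periodic steady states (the same fact the paper uses in Theorem \ref{MIN}) upgrades $a(c,+\infty)^{(1)}\ge\tilde p\gg 0$ to $a(c,+\infty)=\beta$ for every $c<c^0_+$. What your route buys: it avoids the case dichotomy and traveling-wave machinery altogether, and it genuinely uses only (H1)--(H3) together with the scalar theory of \cite{Wein2008} --- note that the paper's second case formally cites Theorem \ref{Qspreading}, whose hypotheses include (H5), although there Theorem B would suffice since $\overline{c}_+=c^*_+$. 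What the paper's route buys: its first case establishes the \emph{strict} inequality $c^*_+>c^0_+$ whenever $\overline{c}_+>c^*_+$, a by-product that is reused verbatim in the proof of Theorem \ref{c0} to rule out $\overline{c}_+>c^*_+$ under (D1)--(D2); your argument does not deliver this refinement. Two small points you flag yourself and which are indeed the load-bearing technicalities: the domination must survive the passage to $\hat Q$ via \eqref{def_Q} (it does, since $(v^{(1)})_s=(v_s)^{(1)}$ and the comparison is pointwise), and your freedom to choose $\omega_1\ll\tilde p$ rests on the fact, contained in the cited lemmas of \cite{FZ}, that $a(c,\pm\infty)$ and hence $c^*_+$ do not depend on the choice of $(\phi,\omega)$ satisfying (C1)--(C3). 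Also, the strict comparison $\tilde p\ll p^*$ you assert is not needed; $\tilde p\le p^*$ (hence membership in $\mathcal{C}_\beta$) is enough.
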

\begin{proof}In the case that $\overline{c}_+>c^*_+$, by the same arguments as in Theorem \ref{Qspreading}, we see that $c^*_+\ge c^*_{1+}$, where $c^*_{1+}$ is the rightward spreading speed of \eqref{u1}. Since $r_1(x)>\displaystyle\frac{r_1(x)}{1+b_1(x)a_1(x)q^*(x)}, \forall x\in \mathbb{R}$, by Proposition \ref{rem} (i), we have $\lambda_1(\mu)>\lambda_0(\mu), \forall \mu\ge0$,  where  $\lambda_1(\mu)$ is the principal eigenvalue of \eqref{eep}. Thus, we have $c^*_+\ge c^*_{1+}>c^0_+$.
	
	In the case that $\overline{c}_+=c^*_+$, let $(u_n(\cdot,\phi),v_n(\cdot,\phi))$ be the solution of system \eqref{NModel} with $\phi=(\phi_1,\phi_2)\in \mathcal{C}_\beta$. Then the positivity of the solution implies that
	$$u_{n+1}(x)\geqslant \int_{\mathbb{R}}\frac{r_1(y)}{1+b_1(y)a_1(y)q^*(y)}u_n(y)k_1(x,y)\di y,$$
	Let $w_n(x,\phi_1)$ be the unique solution of \eqref{c0eq} with $w_0(\cdot)=\phi_1$. Then the comparison principle yields that
	\begin{equation}\label{ineq1}u_n(x,\phi)\geqslant w_n(x,\phi_1), \quad \forall t\ge0,\  x\in \mathbb{R}.\end{equation}
	Since $\lambda(k_1,\displaystyle\frac{r_1}{1+b_1a_1q^*})>1$, Proposition \ref{VLexistence} implies that there exists a unique positive $L$-periodic steady state $w^*(x)$ of \eqref{c0eq}. Let $\phi^0=(\phi_1^0,\phi_2^0)\in \mathcal{C}_\beta$ be choosen as in Theorem \ref{Qspreading} (i) and (ii) such that $\phi_1^0\leqslant w^*$. 
	%
	%
	%
	Suppose, by the contradiction, that $c_+^*<c^0_+$. We choose some $\hat c\in(\overline{c}_+^*,c^0_+)$. Then
	Theorem \ref{Qspreading} implies $\lim_{n\rightarrow\infty,x\ge \hat cn}u_n(x,\phi^0)=0$. By Theorem B as applied to system \eqref{c0eq}, we have $\lim_{n\rightarrow\infty,x\leqslant \hat cn}(w_n(x,\phi^0_1)-w^*(x))=0$.
	However, letting $x=\hat c n$ in \eqref{ineq1},  we get  $\lim_{n\rightarrow\infty,x= \hat cn}w_n(x,\phi^0_1)=0$, which is a contradiction.
\end{proof}

For any given $\mu\in \mathbb{R}$, letting $U_n(x)=e^{-\mu x}\phi(x)[\lambda(\mu)]^n$ in \eqref{linear},
we obtain the following periodic eigenvalue problem:
\begin{align}\label{Lpep}
&\lambda \phi_1= \int_{\mathbb{R}}\frac{r_1(y)}{1+b_1(y)a_1(y)q^*(y)}e^{-\mu(y-x)}\phi_1(y)k_1(x,y)\di y,\nonumber \\
&\lambda \phi_2=\int_{\mathbb{R}}\frac{r_2(y)}{1+b_2(y)q^*(y)}\cdot \frac{b_2(y)a_2(y)q^*(y)\phi_1(y)+\phi_2(y)}{1+b_2(y)q^*(y)}e^{-\mu(y-x)}k_2(x,y)\di y,\\
&\phi_i(x)=\phi_i(x+L), \quad \forall x\in\mathbb{R},\ i=1,2.\nonumber
\end{align}
Let $\overline{\lambda}(\mu)$ be the principal eigenvalue of the following  periodic eigenvalue problem:
\begin{align}\label{Lpep2}
&\lambda \psi=\int_{\mathbb{R}}\frac{r_2(y)}{1+b_2(y)q^*(y)}\cdot \frac{\psi(y)}{1+b_2(y)q^*(y)}e^{-\mu(y-x)}k_2(x,y)\di y,\\
&\psi(x)=\psi(x+L), \quad x\in\mathbb{R}.\nonumber
\end{align}
Then there exists $\mu_0>0$ such that $c^0_+=\frac{\ln\lambda_0(\mu_0)}{\mu_0}$. Now we make the following assumption:
\begin{enumerate}	
	\item[(D1)] $\lambda_0 (\mu_0)>\overline{\lambda}(\mu_0)$.
\end{enumerate} 
\begin{proposition}\label{ef}
	Let (A1)--(A2), (H1)--(H3) and (D1) hold. Then the periodic eigenvalue problem \eqref{Lpep} with $\mu=\mu_0$ has a simple eigenvalue $\lambda_0(\mu_0)$ associated with a positive $L$-periodic eigenfunction $\phi^*=(\phi_1^*,\phi_2^*)$.  
\end{proposition}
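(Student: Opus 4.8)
The plan is to exploit the lower-triangular block structure of the linearization. Writing \eqref{Lpep} at $\mu=\mu_0$ as an eigenvalue problem $\mathcal{L}_{\mu_0}(\phi_1,\phi_2)=\lambda(\phi_1,\phi_2)$ on the space of $L$-periodic continuous pairs, the operator splits as
\[
\mathcal{L}_{\mu_0}=\begin{pmatrix}\mathcal{L}_1 & 0\\ \mathcal{M} & \mathcal{L}_2\end{pmatrix},
\]
where $\mathcal{L}_1$ is the scalar operator of \eqref{eep0}, $\mathcal{L}_2$ is the scalar operator of \eqref{Lpep2} (kernel $\frac{r_2}{(1+b_2q^*)^2}e^{-\mu_0(y-x)}k_2$), and $\mathcal{M}$ is the positive coupling operator with kernel $\frac{r_2b_2a_2q^*}{(1+b_2q^*)^2}e^{-\mu_0(y-x)}k_2$. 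By the Krein--Rutman theory applied to the compact, eventually strongly positive operators $\mathcal{L}_1$ and $\mathcal{L}_2$ (strong positivity of a suitable power follows from (K2) just as in the discussion preceding Proposition~\ref{VLexistence}), $\lambda_0(\mu_0)$ is the algebraically simple principal eigenvalue of $\mathcal{L}_1$ with a strongly positive eigenfunction $\phi_1^*$, while $\overline{\lambda}(\mu_0)=\rho(\mathcal{L}_2)$. The hypothesis (D1) supplies the crucial strict separation $\lambda_0(\mu_0)>\overline{\lambda}(\mu_0)=\rho(\mathcal{L}_2)$.

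First I would construct the eigenfunction. Fix $\phi_1^*\gg0$ as above; it already solves the first equation of \eqref{Lpep}. The second equation then reads $(\lambda_0(\mu_0)I-\mathcal{L}_2)\phi_2=\mathcal{M}[\phi_1^*]$. Since $\lambda_0(\mu_0)>\rho(\mathcal{L}_2)$, the resolvent $(\lambda_0(\mu_0)I-\mathcal{L}_2)^{-1}$ exists and, via the convergent Neumann series $\sum_{k\ge0}\lambda_0(\mu_0)^{-k-1}\mathcal{L}_2^{\,k}$, is a positive operator. As $\phi_1^*\gg0$ makes $\mathcal{M}[\phi_1^*]\ge0$ with $\mathcal{M}[\phi_1^*]\not\equiv0$, the function $\phi_2^*:=(\lambda_0(\mu_0)I-\mathcal{L}_2)^{-1}\mathcal{M}[\phi_1^*]$ is nonnegative, and the strong positivity of a power of $\mathcal{L}_2$ upgrades this to $\phi_2^*\gg0$. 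Thus $\phi^*=(\phi_1^*,\phi_2^*)$ is a strongly positive $L$-periodic eigenfunction for $\lambda_0(\mu_0)$.

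Next I would establish simplicity in two stages. For geometric simplicity, any eigenfunction $(\psi_1,\psi_2)$ of $\mathcal{L}_{\mu_0}$ at $\lambda_0(\mu_0)$ must satisfy $\psi_1\in\ker(\lambda_0(\mu_0)I-\mathcal{L}_1)=\mathrm{span}\{\phi_1^*\}$ by simplicity of $\lambda_0(\mu_0)$ for $\mathcal{L}_1$; writing $\psi_1=c\phi_1^*$, the second row then determines $\psi_2=c\phi_2^*$ uniquely through the invertible operator $\lambda_0(\mu_0)I-\mathcal{L}_2$, so the eigenspace equals $\mathrm{span}\{\phi^*\}$. For algebraic simplicity, suppose $(\psi_1,\psi_2)$ were a rank-two generalized eigenvector, so that $(\mathcal{L}_{\mu_0}-\lambda_0(\mu_0)I)(\psi_1,\psi_2)=c'\phi^*$ with $c'\neq0$. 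Its first component reads $(\mathcal{L}_1-\lambda_0(\mu_0)I)\psi_1=c'\phi_1^*$. But $\lambda_0(\mu_0)$ is algebraically simple for $\mathcal{L}_1$, so $\phi_1^*$ does not lie in the range of $\mathcal{L}_1-\lambda_0(\mu_0)I$; hence $c'=0$, a contradiction. Therefore $\lambda_0(\mu_0)$ is an algebraically simple eigenvalue of $\mathcal{L}_{\mu_0}$.

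The main obstacle is that $\mathcal{L}_{\mu_0}$ is only triangular, and hence not strongly positive, so Krein--Rutman cannot be invoked directly on the full system to obtain positivity and simplicity at one stroke. Everything therefore hinges on transferring the principal-eigenvalue properties from the scalar diagonal blocks $\mathcal{L}_1,\mathcal{L}_2$ and on the strict gap $\lambda_0(\mu_0)>\rho(\mathcal{L}_2)$ furnished by (D1), which is precisely what makes $\lambda_0(\mu_0)I-\mathcal{L}_2$ boundedly invertible with a positive inverse while ruling out $\lambda_0(\mu_0)$ as an eigenvalue of $\mathcal{L}_2$. The one technical point to verify with care is the non-solvability claim $\phi_1^*\notin\mathrm{range}(\mathcal{L}_1-\lambda_0(\mu_0)I)$, which rests on the \emph{algebraic} (not merely geometric) simplicity of the scalar principal eigenvalue together with the Fredholm alternative for the compact operator $\mathcal{L}_1$.
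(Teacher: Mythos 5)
Your proposal is correct and follows essentially the same route as the paper's own proof: exploit the lower-triangular structure, take the strongly positive scalar eigenfunction $\phi_1^*$ of \eqref{eep0}, and use the gap $\lambda_0(\mu_0)>\overline{\lambda}(\mu_0)=\rho(\mathcal{L}_2)$ from (D1) to solve $(\lambda_0(\mu_0)I-\mathcal{L}_2)\phi_2^*=\mathcal{M}[\phi_1^*]$ with a positive solution, then inherit simplicity from the scalar problem. Your write-up merely fills in details the paper compresses --- the Neumann-series argument for positivity of the resolvent (where the paper invokes Krein--Rutman) and the explicit geometric/algebraic simplicity argument (which the paper asserts in one sentence) --- so it is a refinement, not a different proof.
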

\begin{proof}
	Clearly, there exists an $L$-periodic eigenfunction $\phi_1^*\gg0$ associated with the principle eigenvalue $\lambda_0(\mu_0)$ of \eqref{c0eq}, that is,
	$$\lambda_0(\mu_0)\phi_1^*= \int_{\mathbb{R}}\frac{r_1(y)}{1+b_1(y)a_1(y)q^*(y)}e^{-\mu_0(y-x)}\phi_1^*(y)k_1(x,y)\di y.$$
	Since the first equation of \eqref{Lpep} is decoupled from the second one, it suffices to show that $\lambda_0(\mu_0)$ has a positive eigenfunction $\phi^*=(\phi^*_1,\phi^*_2)$ in \eqref{Lpep}, where $\phi^*_2$ is to be determined. Note that
	\begin{align*}
	\lambda \phi_2&=\int_{\mathbb{R}}\frac{r_2(y)}{1+b_2(y)q^*(y)}\cdot \frac{b_2(y)a_2(y)q^*(y)\phi_1^*(y)+\phi_2(y)}{1+b_2(y)q^*(y)}e^{-\mu(y-x)}k_2(x,y)\di y,\\
	&=\int_{\mathbb{R}}\frac{r_2(y)}{1+b_2(y)q^*(y)}\cdot \frac{b_2(y)a_2(y)q^*(y)\phi_1^*(y)}{1+b_2(y)q^*(y)}e^{-\mu(y-x)}k_2(x,y)\di y\\
	&+\int_{\mathbb{R}}\frac{r_2(y)}{1+b_2(y)q^*(y)}\cdot \frac{\phi_2(y)}{1+b_2(y)q^*(y)}e^{-\mu(y-x)}k_2(x,y)\di y\\
	&:=h+\tilde{L}\phi_2.
	\end{align*}
	It follows that $\lambda_0(\mu_0)\phi_2-\tilde{L}\phi_2=h\gg0$.
	It is easy to verify $\tilde{L}$ is a positive and compact, and hence, $s(L)=\overline{\lambda}(\mu_0)$, where $s(L)$ is the spectral radius of $L$. Since $\lambda_0(\mu_0)>\overline\lambda(\mu_0)=s(L)$, by the Krein-Rutman Theorem \cite{Krein}, there exists a unique $\phi_2^*\gg0$ such that  $\lambda_0(\mu_0)\phi_2^*-\tilde{L}\phi_2^*=h\gg0$. It then follows that $(\phi^*_1,\phi^*_2)$ satisfies \eqref{Lpep} with $\mu=\mu_0$. Since $\lambda_0(\mu_0)$ is a simple eigenvalue for \eqref{c0eq}, we see that so is $\lambda_0(\mu_0)$ for \eqref{Lpep}.
\end{proof}
\quad By virtue of  Proposition \ref{ef}, we easily see that for any given $M>0$, the function \begin{equation}\label{eqU}
S_n(x)=Me^{-\mu_0x}[\lambda_0(\mu_0)]^n\phi^*(x),\quad n\ge0,\ x\in\mathbb{R},
\end{equation} where $S_n(x)=(s_n(x),w_n(x))$, is a positive solution of system \eqref{linear}. In order to obtain an explicit formula for the spreading speeding $\overline{c}_+$, we need the following additional condition:
\begin{enumerate}
	\item[(D2)] $\displaystyle\frac{\phi^*_1(x)}{\phi^*_2(x)}\geqslant \max\left\{a_1(x),\frac{1}{a_{2}(x)}\right\},\quad \forall x\in\mathbb{R}$.
\end{enumerate}

We are now in a position to show that system \eqref{NModel} admits a single rightward spreading speed $\overline{c}_+$, which is linearly determinate.

\begin{theorem}\label{c0}
	Let (K1)--(K2), (H1)--(H3) and (D1)--(D2) hold. Then $\overline{c}_+=c^*_+=c^0_+=\inf_{\mu>0}\frac{\ln\lambda_0(\mu)}{\mu}$.
\end{theorem}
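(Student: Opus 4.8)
The plan is to reduce the whole statement to the single inequality $\overline{c}_+\le c^0_+$. Indeed, Proposition \ref{lb} already gives $c^*_+\ge c^0_+$, and the remark following \eqref{defc} gives $c^*_+\le\overline{c}_+$ by the monotonicity of $a(c;\cdot)$ in $c$; the final equality $c^0_+=\inf_{\mu>0}\frac{\ln\lambda_0(\mu)}{\mu}$ is the definition of $c^0_+$. Hence, once $\overline{c}_+\le c^0_+$ is established, the chain $c^0_+\le c^*_+\le\overline{c}_+\le c^0_+$ collapses, forcing $\overline{c}_+=c^*_+=c^0_+$. Thus linear determinacy is exactly the assertion $\overline{c}_+\le c^0_+$, and everything else is already available.

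To prove $\overline{c}_+\le c^0_+$ I would argue by contradiction, assuming $\overline{c}_+>c^0_+$, and build an exponentially decaying upper solution of \eqref{NModel} that travels at the linear speed $c^0_+$. By Proposition \ref{ef} (this is where (D1) is used), the linearized system \eqref{linear} admits the positive joint eigen-solution \eqref{eqU}, namely $S_n(x)=Me^{-\mu_0 x}[\lambda_0(\mu_0)]^n\phi^*(x)=Me^{-\mu_0(x-c^0_+ n)}\phi^*(x)$, where $\mu_0$ realizes $c^0_+=\frac{\ln\lambda_0(\mu_0)}{\mu_0}$. I then set
$$\overline{U}_n(x)=\Big(\min\{S_n^{(1)}(x),p^*(x)\},\ \min\{S_n^{(2)}(x),q^*(x)\}\Big)\in\mathcal{C}_\beta,$$
and claim that $\overline{U}_n$ is a continuous upper solution of \eqref{NModel}, in complete analogy with the construction carried out in the proof of Theorem \ref{Qspreading}, the only difference being that here a \emph{single} exponent $\mu_0$ and the \emph{single} eigenfunction $\phi^*$ are used for both components.

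The verification of the upper-solution property is the core of the argument, and it is precisely here that (D2) is tailored to enter. On the region where neither component is truncated, both $\overline{u}_n,\overline{v}_n$ lie on the exponential branch, so $\overline{u}_n/\overline{v}_n=\phi_1^*/\phi_2^*\ge\max\{a_1,1/a_2\}$ by (D2); this delivers exactly the two pointwise inequalities $\overline{u}_n\ge a_1\overline{v}_n$ and $a_2\overline{u}_n\ge\overline{v}_n$, which make the denominators of $Q_1$ and $Q_2$ dominate those of the sharp linearization, i.e. $Q[\overline{U}_n]\le L[\overline{U}_n]=\overline{U}_{n+1}$. On the truncated regions I would instead invoke $\overline{U}_n\le\beta$ together with the fixed-point identity $Q[\beta]=\beta$, so that $Q_i[\overline{U}_n]\le Q_i[\beta]=\beta_i=\overline{U}_{n+1}$ wherever the $i$-th cap is active, exactly as in the corresponding computation in Theorem \ref{Qspreading}. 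The main obstacle is the bookkeeping forced by the interaction of the truncation with the coupling: the two caps $S_n^{(1)}=p^*$ and $S_n^{(2)}=q^*$ are reached at different points of the moving frame, so one must check that the sign conditions supplied by (D2) persist across the ``mixed'' zone where one component is capped while the other still sits on the exponential. Controlling this zone through the ratio $\phi_1^*/\phi_2^*$ is the delicate point, and is the reason (D2) is imposed in this specific form.

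Granting the upper solution, I would conclude exactly as in Theorem \ref{Qspreading}. Take front-like data $\phi\in\mathcal{C}_\beta$ with $\phi(x)=0$ for $x\ge H$ and $\phi(x)\ge\sigma\gg0$ for $x\le K$, and choose $A\in L\mathbb{Z}$ large enough that $\phi(x)\le\overline{U}_0(x-A)$; the translation invariance of $Q$ and the comparison principle then yield $0\le U_n(x,\phi)\le\overline{U}_n(x-A)$ for all $n\ge0$ and $x\in\mathbb{R}$. Since $\overline{c}_+>c^0_+$ under the contradiction hypothesis, fix $\hat c\in(c^0_+,\overline{c}_+)$. On one hand, the lower bound \eqref{ineq2} (valid because $\hat c<\overline{c}_+$, and established in Theorem \ref{Qspreading} independently of (H4)--(H5)) gives $\liminf_{n\to\infty}|U_n(\hat c n,\phi)|\ge\delta(\hat c)>0$; on the other hand, since $\hat c>c^0_+$,
$$|\overline{U}_n(\hat c n-A)|\le M e^{-\mu_0((\hat c-c^0_+)n-A)}\,\|\phi^*\|_{\mathbb{P}}\longrightarrow 0,\qquad n\to\infty.$$
Combining the two estimates at $x=\hat c n$ produces a contradiction, so $\overline{c}_+\le c^0_+$, and the proof of $\overline{c}_+=c^*_+=c^0_+=\inf_{\mu>0}\frac{\ln\lambda_0(\mu)}{\mu}$ is complete.
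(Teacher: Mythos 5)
Your proof is correct, and its endgame is genuinely different from the paper's. The common core is the construction: under (D1), Proposition \ref{ef} yields the positive eigenpair $(\lambda_0(\mu_0),\phi^*)$, and (D2) --- through the ratio identity $s_n/w_n=\phi_1^*/\phi_2^*\ge\max\{a_1,1/a_2\}$ --- makes the exponential profile \eqref{eqU} an upper solution of \eqref{NModel}. The difference lies in how the upper solution is exploited. The paper compares against the \emph{untruncated} $S_n$ and invokes Theorem B(ii) to get $c^*_+\le c^0_+$; combined with Proposition \ref{lb} this gives $c^*_+=c^0_+$, and the remaining equality $\overline{c}_+=c^*_+$ is then forced by the traveling-wave dichotomy inside the proof of Proposition \ref{lb}: if $\overline{c}_+>c^*_+$, the wave at speed $c^*_+$ must connect $\beta$ to $(0,q^*)$, whence $c^*_+\ge c^*_{1+}>c^0_+$, a contradiction. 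You instead prove $\overline{c}_+\le c^0_+$ in one stroke, by playing the exponential decay of the upper solution along rays $x=\hat c n$ with $\hat c\in(c^0_+,\overline{c}_+)$ against the non-vanishing estimate \eqref{ineq2}; your observation that \eqref{ineq2} needs only (A1)--(A5) (hence only (K1)--(K2), (H1)--(H3)), and not (H4)--(H5), is correct, since it comes from the abstract recursion theory of \cite{WLL2002,Yu}. Your route is more economical: it avoids the traveling-wave argument and the strict inequality $c^*_{1+}>c^0_+$ altogether, and collapses the chain $c^0_+\le c^*_+\le\overline{c}_+\le c^0_+$ directly. Two remarks on execution. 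First, your truncation $\overline{U}_n=\min\{S_n,\beta\}$ (componentwise) is not merely cosmetic but an improvement: it keeps both comparands in $\mathcal{C}_\beta$, where monotonicity of $Q$ is guaranteed, whereas comparison against the unbounded $S_n$ tacitly requires monotonicity of the integrands beyond $\mathcal{C}_\beta$, which can fail once $v>q^*+1/(b_1a_1)$; the paper itself truncates in the analogous construction in Theorem \ref{Qspreading}. Second, the ``mixed zone'' you single out as the delicate point is in fact harmless: the inequality to verify is pointwise in the integration variable $y$, and at every $y$ one first enlarges $(\overline{u}_n(y),\overline{v}_n(y))$ to the untruncated pair $(s_n(y),w_n(y))$ --- monotonicity of the integrands in this range holds precisely because $\overline{v}_n\le q^*$ --- and then applies (D2) to that pair, whose ratio is exactly $\phi_1^*/\phi_2^*$; at points where a cap is active, the bound $Q[\overline{U}_n]\le Q[\beta]=\beta$ finishes the argument, exactly as in Theorem \ref{Qspreading}.
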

\begin{proof}
	First, we verify that $S_n(x)=(s_n,w_n)$, as defined in \eqref{eqU}, is an upper solution of system \eqref{NModel}. Since $\displaystyle\frac{s_n}{w_n}=\frac{\phi^*_1}{\phi^*_2}$ and (D2) holds , it follows that 
	\begin{eqnarray}
	& &s_{n+1}(x)-\int_{\mathbb{R}}\frac{r_1(y)s_n(y)}{1+b_1(y)(s_n(y)+a_1(y)(q^*(y)-w_n(y))}k_1(x,y)\di y\nonumber\\
	& &=
	\int_{\mathbb{R}}\frac{r_1(y)b_1(y)s_n(y)w_n(y)k_1(x,y)}{[1+b_1(y)a_1(y)q^*(y)][1+b_1(y)(s_n(y)+a_1(y)(q^*(y)-w_n(y))]}\Big(\frac{s_n(y)}{w_n(y)}-a_1(y)\Big)\di y \nonumber\\
	& &=\int_{\mathbb{R}}\frac{r_1(y)b_1(y)s_n(y)w_n(y)k_1(x,y)}{[1+b_1(y)a_1(y)q^*(y)][1+b_1(y)(s_n(y)+a_1(y)(q^*(y)-w_n(y))]}\Big(\frac{\phi_1^*(y)}{\phi_2^*(y)}-a_1(y)\Big)\di y \nonumber\\
	& &\geqslant 0,
	\end{eqnarray}
	and
	\begin{eqnarray}
	& & w_{n+1}(x)-\int_{\mathbb{R}}\frac{r_2(y)}{1+b_2(y)q^*(y)}\cdot\frac{r_2(y)a_2(y)q^*(y)s_n(y)+w_n(y)}{1+r_2(y)(q^*(y)-w_n(y)+a_2(y)s_n(y))}k_2(x,y)\di y\nonumber\\
	& &=
	\int_{\mathbb{R}}\frac{r_2(y)b_2(y)w_n(y)k_2(x,y)}{1+b_2(y)q^*(y)}\cdot\frac{b_2(y)a_2(y)q^*(y)s_n(y)+w_n(y)}{[1+b_2(y)q^*(y)][1+b_2(y)(q^*(y)-w_n(y)+a_2(y)s_n(y))]}\cdot \nonumber\\
	& &\quad\Big(a_2(y)\frac{s_n(y)}{w_n(y)}-1\Big)\di y\nonumber\\
	& &=
	\int_{\mathbb{R}}\frac{r_2(y)b_2(y)w_n(y)k_2(x,y)}{1+b_2(y)q^*(y)}\cdot\frac{b_2(y)a_2(y)q^*(y)s_n(y)+w_n(y)}{[1+b_2(y)q^*(y)][1+b_2(y)(q^*(y)-w_n(y)+a_2(y)s_n(y))]}\cdot \nonumber\\
	& &\quad\Big(a_2(y)\frac{\phi_1^*(y)}{\phi_2^*(y)}-1\Big)\di y\nonumber\\
	& &\geqslant 0,
	\end{eqnarray}
	Thus, $S_n(x)$ is an upper solution of \eqref{NModel}. As we did in the proof of Proposition \ref{lb}, we can choose some $\phi^0\in\mathcal{C}_\beta$ satisfying the conditions in Theorem \ref{Qspreading} (i) and (ii). Then there exists a sufficiently large number $M_0>0$ such that 
	$$0\leqslant \phi^0(x)\leqslant M_0e^{-\mu_0x}\phi^*(x)=S_0(x),\quad \forall x\in\mathbb{R}.$$
	Let $U_n(x)$ be the unique solution of system \eqref{NModel} with $U_0(\cdot)=\phi_0$. Then the comparison principle, together with the fact that $c^0_+\mu_0=\ln\lambda_0(\mu_0)$, gives rise to  
	$$0\!\leqslant\! U_n(x)\!\leqslant\! S_n(x)\!=\!M_0e^{-\mu_0x}\lambda_0(\mu_0)^n\phi^*(x)\!=\!M_0e^{-\mu_0(x-c^0_+)n}\phi^*(x),\quad \forall n\geqslant 0,\ x\in\mathbb{R}.$$
	It follows that for any given $\varepsilon>0$, there holds
	$$0\leqslant U_n(x)\leqslant S_n(x)\leqslant M_0e^{-\mu_0\varepsilon n}\phi^*(x),\quad \forall n\geqslant 0,\ x\geqslant (c^0_++\varepsilon)n,$$
	and hence,
	$$\lim_{n\rightarrow\infty,x\geqslant (c^0_++\varepsilon)n}U_n(x)=0.$$
	By Theorem B (ii), we obtain $c^*_+\leqslant c^0_++\varepsilon$. Letting $\varepsilon\rightarrow 0$, we have $c^*_+\geqslant c^0_+$. In the case that $\overline{c}_+>c^*_+$, the proof of Proposition \ref{lb} shows that $c^*_+>c^0_+$, a contradiction. This shows that $\overline{c}_+=c^*_+=c^0_+$. 
\end{proof}

\section{An application}
In this section, we assume the $k_i(x,y) \ (i=1,2)$ can be written as a function of the dispersal distance, i.e., $k_i(x,y)=k_i(x-y)$, with the following property:
\begin{itemize}
	\item[(K3)] $\displaystyle \int_{-\infty}^{+\infty}k_i(x-y)\di y=1$, and $k_i(-y)=k_i(y)$.  
\end{itemize}

As an application, we consider a patchy lanscape in which both species have the same spatially varying carrying capacity, $C_1(x)=C_2(x)=C(x)$, that is, 
\begin{align*}
	C(x)=\left\{\begin{array}{l}
		C_M, \quad\quad\quad 0\leqslant x<L_1, \\[2mm]
		C_m<C_M,  \ L_1\leqslant x<L,
	\end{array}\right.
\end{align*}
This indicates that Patch 1 is more suitable for both species, compared with Patch 2. The growth rates $r_i$ of $i$-th species ($i=1,2$) are constant, which are environmental homogeneous, and $a_i(x)$ are also piecewise constant functions. Therefore, we are led to the following spatically periodic model with kernels $k_i$ satisfying assumptions (K2)-(K3):
\begin{eqnarray}\label{dhmp}
	& &p_{n+1}(x)=
	\int_{\mathbb{R}}\frac{r_1p_n(y)}{1+b_1(y)(p_n(y)+a_1(y)q_n(y))}k_1(x-y)\di y, \\ 
	& &q_{n+1}(x)=
	\int_{\mathbb{R}}\frac{r_2q_n(y)}{1+b_2(y)(q_n(y)+a_2(y)p_n(y))}k_2(x-y)\di y, \ x\in\mathbb{R}, \nonumber
\end{eqnarray}
where 
$b_i(x)=\displaystyle\frac{r_i-1}{C(y)}.$

We also need the following assumption on system (\ref{dhmp}): 
\begin{enumerate}
	\item[(M)] $a_1^{M}<\displaystyle \frac {C_m}{C_M}$, and $\displaystyle \frac {C_M}{C_m}<a_2^{m}$, where
	$a_1^{M}=\max_{x\in[0,L]}a_1(x),\ a_2^{m}=\min_{x\in[0,L]}a_2(x)$.
\end{enumerate}
\begin{lemma}\label{M}
	Let (M) hold and assume that kernels $k_i$ satisfy (K1)--(K3). Then (H1)--(H5) are valid for system \eqref{dhmp}. 
\end{lemma}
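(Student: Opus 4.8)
The plan is to verify the five hypotheses (H1)--(H5) one at a time, exploiting the special structure that both species share the common carrying capacity $C(x)$ and have constant growth rates $r_i$, together with the symmetry assumption (K3). I would begin by recalling that under (M) the interspecific competition coefficients are strongly biased: $a_1$ is small (below $C_m/C_M$) while $a_2$ is large (above $C_M/C_m$), so species $1$ is a strong competitor and species $2$ is weak. This asymmetry is exactly what should force competition exclusion in favor of $E_1=(p^*,0)$, and it is the engine behind every one of (H1)--(H5).

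First I would establish (H1): since $r_i>1$ are constant and $k_i$ satisfy (K1)--(K2), the principal eigenvalue $\lambda(k_i,r_i)$ of the linearization at zero dominates the averaged growth, and $r_i>1$ gives $\lambda(k_i,r_i)>1$ directly (for a symmetric kernel integrating to one, the constant eigenfunction yields eigenvalue equal to the spatial average of $r_i$, namely $r_i>1$). By Proposition~\ref{VLexistence} this produces the two semi-trivial steady states $p^*(x)$ and $q^*(x)$. For (H2) I must show $\lambda\bigl(k_1,\frac{r_1}{1+b_1a_1q^*}\bigr)>1$; here I would bound $q^*(x)$ above using the scalar equation it solves (its Beverton--Holt fixed-point structure gives $q^*\le C(x)$ pointwise), and then use $a_1<C_m/C_M$ from (M) to show the reduced growth rate $\frac{r_1}{1+b_1a_1q^*}$ still averages above $1$, again reading off the eigenvalue from the constant test function. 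The key inequality is that $b_1a_1q^*=\frac{(r_1-1)a_1q^*}{C}$ stays small enough because $a_1q^*/C\le a_1 C_M/C_m\cdot(\text{something})$ — this is where the explicit two-patch form of $C(x)$ and the bound $a_1^M<C_m/C_M$ must be combined carefully.

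For (H3) (no coexistence state in the interior) I would argue that any interior steady state would contradict the one-sided competitive dominance: the condition (M), namely $a_1$ small and $a_2$ large, rules out a stable coexistence cell in the Leslie/Gower competition structure, paralleling \cite[Lemma 2]{cushing}; I would set up the fixed-point system and show the two nullcline-type integral constraints are incompatible under (M). Then (H4), $c^*_{1+}+c^*_{2-}>0$, follows because with the symmetric kernels of (K3), Proposition~\ref{rem}(iii) gives $\lambda_m(\mu)=\lambda_m(-\mu)$, so the rightward and leftward spreading speeds of each scalar equation coincide in magnitude and are strictly positive (both are monostable KPP-type problems with $\lambda(k_i,\cdot)>1$), whence the sum is positive. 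Finally, for (H5) I would use the variational/infimum formula $c^*_{1+}=\inf_{\mu>0}\frac{\ln\lambda_1(\mu)}{\mu}$ together with the comparison $\frac{r_2}{(1+b_2q^*)^2}\le r_1$ that (M) is designed to guarantee, so that $\lambda_2(\mu)\le\lambda_1(\mu)$ near $\mu=0^+$ and the limsup of $\frac{\ln\lambda_2(\mu)}{\mu}$ is dominated by $c^*_{1+}$.

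The main obstacle I expect is (H5) (and to a lesser extent (H2)): both require quantitatively comparing two principal eigenvalues of nonself-adjoint periodic integral operators, and the inequality must hold with the correct sign in the delicate limit $\mu\to 0^+$. The clean route is Proposition~\ref{rem}(i), which converts an eigenvalue comparison into a pointwise comparison of the kernels' multipliers; so the real work is reducing (H5) to a pointwise inequality between $\frac{r_2}{(1+b_2q^*)^2}$ and the effective growth rate $\frac{r_1}{1+b_1a_1q^*}$ of the decoupled first equation, and then checking that (M) --- specifically $C_M/C_m<a_2^m$ --- makes species $2$'s linearized growth uniformly weaker. I would handle this by plugging in the explicit two-patch $C(x)$, estimating $q^*$ on each patch, and verifying the inequality patchwise, which reduces everything to the two scalar inequalities asserted in (M).
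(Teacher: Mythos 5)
Your treatment of (H1), (H3) and (H4) follows essentially the paper's route: (H1) by the constant-eigenfunction observation, (H3) by deriving incompatible integral constraints from the fixed-point system (the paper does this concretely by evaluating at minimum points $\tilde p_0,\tilde q_0$ of a putative coexistence state to get $(C_m-C_Ma_1^M)\tilde q_0\leqslant (C_M-C_ma_2^m)\tilde p_0$, whose two sides have opposite signs under (M)), and (H4) by evenness (Proposition \ref{rem}(iii), using (K3)) plus log-convexity. The genuine gap is in (H5), and it is a wrong mechanism rather than a missing detail. You propose to prove $\lambda_2(\mu)\leqslant\lambda_1(\mu)$ near $\mu=0^+$ and conclude that $\limsup_{\mu\to0^+}\frac{\ln\lambda_2(\mu)}{\mu}$ is dominated by $c^*_{1+}$. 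This cannot work: $c^*_{1+}=\inf_{\mu>0}\frac{\ln\lambda_1(\mu)}{\mu}$ is an infimum, while $\frac{\ln\lambda_1(\mu)}{\mu}\to+\infty$ as $\mu\to0^+$ because $\lambda_1(0)=r_1>1$. So any comparison of $\lambda_2$ with $\lambda_1$ valid near $\mu=0$ yields only $\limsup\leqslant+\infty$. (Moreover, the pointwise inequality you invoke is not implied by (M), which imposes no relation between $r_1$ and $r_2$; and the multiplier $\frac{r_2}{(1+b_2q^*)^2}$ belongs to \eqref{Lpep2}, whereas (H5) concerns \eqref{eep2}, whose multiplier is $\frac{r_2}{1+b_2q^*}$.) The missing idea is an exact identity, not a comparison: $\lambda_2(0)=1$, because the steady-state equation for $q^*$ says precisely that $q^*$ is a positive eigenfunction of the $\mu=0$ operator in \eqref{eep2} with eigenvalue $1$. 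Combining $\ln\lambda_2(0)=0$ with evenness of $\ln\lambda_2$ (Proposition \ref{rem}(iii), from (K3)) and its differentiability at $0$ gives $\lim_{\mu\to0^+}\frac{\ln\lambda_2(\mu)}{\mu}=0$, and (H5) then follows because $c^*_{1+}>0$ was already established in the (H4) step.

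There is also a smaller but real error in your (H2): the claim that the Beverton--Holt structure forces $q^*(x)\leqslant C(x)$ pointwise is false in a heterogeneous habitat, since immigration from the good patch can sustain $q^*>C_m$ inside the poor patch; no local carrying-capacity bound holds. What is true, and what the paper proves, is the global bound $q^*\leqslant C_M$: evaluate the fixed-point relation at a maximum point $q_0=\max_{x}q^*(x)$, use the monotonicity of $s\mapsto \frac{r_2s}{1+\frac{r_2-1}{C}s}$ together with $C(y)\leqslant C_M$ to obtain $\frac{r_2}{1+\frac{r_2-1}{C_M}q_0}\geqslant 1$, i.e.\ $q_0\leqslant C_M$. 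With that bound, your concluding chain $\frac{a_1(x)q^*(x)}{C(x)}\leqslant\frac{a_1^MC_M}{C_m}<1$ is exactly the paper's, and (H2) follows from Proposition \ref{rem}(i). So (H2) is repairable along your own lines, but (H5) requires the $\lambda_2(0)=1$ observation that your proposal never makes.
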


\begin{proof}
	(H1) holds immedately by Proposition \ref{rem} (i) with $m_i(x)=r_i>1$.
	
	Now we verify (H2). Let $(0,q^*)$ be the $L$-periodic semi-trivial steady state of sytem (\ref{dhmp}), which is guaranteed by (H1), and $q_0=\max\limits_{x\in[0,L]}q^*(x)$. By a comparison argument, we have
	\[\int_{\mathbb{R}}\frac{r_2q_0}{1+\frac{r_2-1}{C(y)}q_0}k_2(x-y)\di y\geqslant \int_{\mathbb{R}}\frac{r_2q^*}{1+\frac{r_2-1}{C(y)}q^*}k_2(x-y)\di y=q^*.\]
	It then follows that
	\[\int_{\mathbb{R}}\frac{r_2q_0}{1+\frac{r_2-1}{C(y)}q_0}k_2(x-y)\di y\geqslant q_0,\]
	and hence,
	\[\int_{\mathbb{R}}\frac{r_2q_0}{1+\frac{r_2-1}{C_M}q_0}k_2(x-y)\di y
	\geqslant\int_{\mathbb{R}}\frac{r_2q_0}{1+\frac{r_2-1}{C(y)}q_0}k_2(x-y)\di y\geqslant q_0.\]
	Then we have
	\[\frac{r_2}{1+\frac{r_2-1}{C_M}q_0}\geqslant 1, \ \text{i.e.,}\ q^*\leqslant q_0\leqslant C_M. \]
	Taking $m_1(x)=\displaystyle\frac {r_1}{1+b_1(x)a_1(x)q^*(x)}=\frac{r_1}{1+\frac{r_1-1}{C(y)}a_1(x)q^*(x)}$, we obtain 
	\[\frac{r_1}{1+\frac{r_1-1}{C(y)}a_1(x)q^*(x)}\geqslant \frac{r_1}{1+\frac{r_1-1}{C_m}a_1^Mq^*(x)}\geqslant \frac{r_1}{1+\frac{r_1-1}{C_m}a_1^MC_M}>\frac {r_1}{1+r_1-1}=1.\]
	Thus, $\lambda\Big(k_1, \displaystyle\frac {r_1}{1+b_1a_1q^*}\Big)>1$ due to Proposition \ref{rem} (i).
	
	Next we prove (H3) by a way of contradiction. Suppose $(\tilde{p},\tilde{q})$ is the positive $L$-periodic steady state. We introduce the following system 
	\begin{eqnarray}\label{compari}
	& &p_{n+1}(x)=
	\int_{\mathbb{R}}\frac{r_1p_n(y)}{1+\frac{r_1-1}{C_m}(p_n(y)+a_1^Mq_n(y))}k_1(x-y)\di y, \\ 
	& &q_{n+1}(x)=
	\int_{\mathbb{R}}\frac{r_2q_n(y)}{1+\frac{r_2-1}{C_M}(q_n(y)+a_2^mp_n(y))}k_2(x-y)\di y, \ x\in\mathbb{R}, \nonumber
	\end{eqnarray}
	with $\tilde{p}_0=\min\limits_{x\in[0,L]}\tilde{p}(x)$, $\tilde{q}_0=\min\limits_{x\in[0,L]}\tilde{q}(x)$. Since $(p_n(x), q_n(x))$ satisfies system (\ref{dhmp}), we have
	\begin{eqnarray*}
		& &p_{n+1}(x)\geqslant
		\int_{\mathbb{R}}\frac{r_1p_n(y)}{1+\frac{r_1-1}{C_m}(p_n(y)+a_1^Mq_n(y))}k_1(x-y)\di y, \\ 
		& &q_{n+1}(x)\leqslant
		\int_{\mathbb{R}}\frac{r_2q_n(y)}{1+\frac{r_2-1}{C_M}(q_n(y)+a_2^mp_n(y))}k_2(x-y)\di y, \ x\in\mathbb{R}.
	\end{eqnarray*}
	By the comparison argument, we easily verify that
	\[\int_{\mathbb{R}}\frac{r_1\tilde{p}_0}{1+\frac{r_1-1}{C_m}(\tilde{p}_0+a_1^M\tilde{q}_0)}k_1(x-y)\di y\leqslant \int_{\mathbb{R}}\frac{r_1\tilde{p}(y)}{1+\frac{r_1-1}{C(y)}(\tilde{p}(y)+a_1(y)\tilde{q}(y))}k_1(x-y)\di y=\tilde{p}(x).\]
	It follows that 
	\[\frac{r_1\tilde{p}_0}{1+\frac{r_1-1}{C_m}(\tilde{p}_0+a_1^M\tilde{q}_0)}\int_{\mathbb{R}}k_1(x-y)\di y\leqslant \tilde{p}_0,\]
	which implies that
	\[\frac{r_1}{1+\frac{r_1-1}{C_m}(\tilde{p}_0+a_1^M\tilde{q}_0)}\leqslant 1.\]
	Similarily, we have $\displaystyle\frac{r_2}{1+\frac{r_2-1}{C_M}(\tilde{q}_0+a_2^m\tilde{p}_0)}\geqslant 1$. 
	A simple computation shows that
	\[\frac{\tilde{q}_0+a_2^m\tilde{p}_0}{C_M}\leqslant 1\leqslant \frac{\tilde{p}_0+a_1^M\tilde{q}_0}{C_m}, \]
	that is, 
	\begin{equation}\label{him}
	\Big(C_m-C_Ma_1^M\Big)\tilde{q}_0\leqslant \Big(C_M-C_ma_2^m\Big)\tilde{p}_0.
	\end{equation}
	By assumption (M), we obtain 
	\[C_m-C_Ma_1^M>0, \ C_M-C_ma_2^m\leqslant 0,\]
	which is a contradiction to (\ref{him}).
	
	Now we prove (H4). By Proposition \ref{rem} (ii) and (iii) with $m(x)=r_1(x)$, it is easy to see that the principle $\lambda_1(\mu)$ of \eqref{eep} is an even function of $\mu$ on $\mathbb{R}$. Since $\lambda_1(\mu)$ is $\ln$-convex on $\mathbb{R}$ and $\lambda_1(0)>1$, we have $\lambda_1(\mu)>1,\forall \mu>0.$ It follows that $c_{1+}^*=\inf\limits_{\mu>0}\frac{\ln\lambda _1(\mu)}{\mu}>0$. Similarly, we can show that $c_{2-}^*>0$. Thus, we have $c_{1+}^*+c_{2-}^*>0$.
	
	To verify (H5), it suffices to show that $\lim_{\mu\to 0^+}\frac{\ln\lambda _2(\mu)}{\mu}=0$, where $\lambda_2(\mu)$ is the principal eigenvalue of \eqref{eep2}. By Lemma \ref{rem}(b)(c), $\ln\lambda_2(\mu)$ is an even function on $\mathbb{R}$, and $n$-ordered differentiable (see \cite{Ding15,Liang}). Since $\lambda_2(0)=1$, it follows that
	$\lim_{\mu\rightarrow 0^+}\frac{\ln\lambda _2(\mu)}{\mu}=0<c^*_{1+}.$	
\end{proof}

As a consequence of Lemma \ref{M} and Theorem \ref{VLEQ}, we have the following result.
\begin{theorem}
	Assume that (K1)--(K3), and (M) hold. Then $E_1:=(p^*(x),0)$ is globally asymptotically stable with respect
	to initial values in $\mathbb{P}_+\backslash\{0,E_2\}$.
\end{theorem}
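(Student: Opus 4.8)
The plan is to combine the two main results that have already been established in the excerpt: Theorem \ref{VLEQ}, which gives global asymptotic stability of $E_1=(p^*(x),0)$ for system \eqref{VL} under hypotheses (K1)--(K2) and (H1)--(H3), and Lemma \ref{M}, which verifies that (H1)--(H5) hold for the specific patchy model \eqref{dhmp} whenever (K1)--(K3) and (M) are satisfied. Since \eqref{dhmp} is exactly the special case of \eqref{VL} with $r_i(x)\equiv r_i$ constant and kernels of convolution type $k_i(x,y)=k_i(x-y)$, the strategy is simply to feed the conclusions of Lemma \ref{M} into the hypotheses of Theorem \ref{VLEQ}.

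\medskip

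\noindent The key steps are as follows. First I would observe that assumption (K3) together with (K1)--(K2) places system \eqref{dhmp} squarely within the framework of \eqref{VL}, so that the abstract theory applies verbatim; in particular the semi-trivial steady states $E_1=(p^*(x),0)$ and $E_2=(0,q^*(x))$ exist and the solution map is well defined on $\mathbb{P}_+$. Second, I would invoke Lemma \ref{M} to conclude that (H1)--(H5) hold for \eqref{dhmp}; only (H1)--(H3) are actually needed here, but Lemma \ref{M} supplies all of them. Third, with (K1)--(K2) and (H1)--(H3) now verified, I would apply Theorem \ref{VLEQ} directly: it asserts that $E_1$ is globally asymptotically stable for all initial data $\phi=(\phi_1,\phi_2)\in\mathbb{P}_+$ with $\phi_1\not\equiv 0$. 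The final step is to reconcile the initial-data condition: the set of $\phi\in\mathbb{P}_+$ with $\phi_1\not\equiv 0$ contains $\mathbb{P}_+\backslash\{0,E_2\}$, since any $\phi$ with $\phi_1\equiv 0$ lies on the boundary face $\{0\}\times Y_+$, and by (H1)--(H3) trajectories starting there converge to $E_2$ (the dynamics restricted to $\phi_1\equiv0$ reduce to the scalar equation for $q$, whose positive steady state is $q^*$ by Proposition \ref{VLexistence}). Thus excluding $\{0,E_2\}$ is the correct formulation, and the theorem follows.

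\medskip

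\noindent I do not anticipate a genuine obstacle here, since the statement is a corollary that assembles two prior results. The only point requiring a little care is the precise basin of attraction: Theorem \ref{VLEQ} is stated for $\phi_1\not\equiv 0$, whereas the present theorem is stated for $\mathbb{P}_+\backslash\{0,E_2\}$. I would make explicit that these descriptions are compatible by noting that if $\phi_1\not\equiv0$ then certainly $\phi\neq 0$ and $\phi\neq E_2$ (the latter because $E_2$ has first component identically zero), so the hypothesis of Theorem \ref{VLEQ} is implied; conversely the set $\mathbb{P}_+\backslash\{0,E_2\}$ could in principle contain points with $\phi_1\equiv0$ but $\phi_2\neq q^*$, and for completeness one checks via the scalar theory of Proposition \ref{VLexistence} that such trajectories are confined to the invariant face $\phi_1\equiv0$ and hence cannot converge to $E_1$. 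Since the theorem as phrased only claims attraction to $E_1$, I would state the conclusion for the cleaner invariant set $\{\phi_1\not\equiv0\}$ and remark that this is the relevant biological scenario of successful invasion by the first species.
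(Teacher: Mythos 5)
Your proposal matches the paper's own proof exactly: the paper presents this theorem as an immediate consequence of Lemma \ref{M} (which verifies (H1)--(H5) for system \eqref{dhmp} under (K1)--(K3) and (M)) together with Theorem \ref{VLEQ}, which is precisely your assembly. Your additional care in reconciling the stated basin $\mathbb{P}_+\backslash\{0,E_2\}$ with the hypothesis $\phi_1\not\equiv 0$ of Theorem \ref{VLEQ} --- noting that initial data with $\phi_1\equiv 0$ lie on an invariant face whose trajectories converge to $E_2$, so the cleaner formulation is $\{\phi\in\mathbb{P}_+:\phi_1\not\equiv 0\}$ --- is a point the paper passes over in silence, and your resolution of it is correct.
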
   
For simplicity, we transfer system \eqref{dhmp} into the following cooperative system:
\begin{eqnarray}\label{dhmp2}
& &u_{n+1}(x)=
\int_{\mathbb{R}}\frac{r_1u_n(y)}{1+b_1(y)(u_n(y)+a_1(y)(q^*(y)-v_n(y))}k_1(x-y)\di y, \\ 
& &v_{n+1}(x)=
\int_{\mathbb{R}}\frac{r_2}{1+b_2(y)q^*(y)}\cdot \frac{b_2(y)a_2(y)q^*(y)u_n(y)+v_n(y)}{1+b_2(y)(q^*(y)+a_2(y)u_n(y)-v_n(y))}k_2(x-y)\di y.   \nonumber
\end{eqnarray}

By virtue of Propositions \ref{rem} and \ref{lb}, we see that $\overline{c}_+\ge c^0_+>0$. The next result about spreading speeds is implied by Theorem \ref{Qspreading}. 
\begin{theorem}
	Assume that (K1)--(K3), and (M) hold. Let $u(t,\cdot,\phi)$ be the solution of system \eqref{dhmp2} with $u(0,\cdot)=\phi\in\mathcal{C}_{u^*}$. Then the following statements are valid for system \eqref{dhmp2}:
	\begin{enumerate}
		\item[(i)]If $\phi\in\mathcal{C}_{\beta}$, $0\leqslant \phi\leqslant \omega\ll \beta$ for some $\omega\in \mathcal{C}^{per}_{\beta}$, and $\phi(x)=0, \forall x\ge H$, for some $H\in \mathbb{R}$, then $\lim\limits_{n\rightarrow\infty,x\ge cn}(u_n(x,\phi),v_n(x,\phi))=(0,0)$ for any $c>\overline{c}_+$.
		\item[(ii)]If $\phi\in\mathcal{C}_{\beta}$ and $\phi(x)\ge \sigma$, $\forall x\leqslant K$, for some $\sigma\in \mathbb{R}^2$ with $\sigma\gg0$ and $K\in\mathbb{R}$, then $\lim\limits_{n\rightarrow\infty,x\leqslant cn}((u_n(x,\phi),v_n(x,\phi))-\beta(x))=0$ for any $c<\overline{c}_+$.
	\end{enumerate}
\end{theorem}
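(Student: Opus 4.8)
The plan is to recognize that the theorem is a direct specialization of the general spreading-speed result, Theorem \ref{Qspreading}, to the patchy model at hand, so that essentially no new analysis is required beyond an identification of hypotheses. System \eqref{dhmp2} is precisely the cooperative reformulation of the patchy competition model \eqref{dhmp} obtained through the change of variables $u_n=p_n$, $v_n=q^*(x)-v_n$ (i.e. $v_n=q^*(x)-q_n$), exactly the transformation that carries \eqref{VL} into \eqref{NModel}. Moreover, \eqref{dhmp} is itself the special case of the general competition system \eqref{VL} in which the growth rates $r_i$ are spatially constant and the dispersal kernels depend only on the displacement, $k_i(x,y)=k_i(x-y)$. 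Consequently \eqref{dhmp2} has exactly the structure of the abstract cooperative system \eqref{NModel}, its three $L$-periodic steady states are $\hat E_0=(0,q^*(x))$, $\hat E_1=\beta=(p^*(x),q^*(x))$ and $\hat E_2=(0,0)$, and the map $Q=(Q_1,Q_2)$ it generates on $\mathcal{C}_\beta$ satisfies (A1)--(A5) by the same argument as in the proposition verifying (A1)--(A5) for \eqref{NModel}.

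The key point is that all the standing hypotheses required by Theorem \ref{Qspreading} have already been established for this model. First, (K3) asserts that each $k_i$ is a symmetric probability density of the displacement; together with the periodicity in (K1) this supplies the normalization, nonnegativity, lower semicontinuity, irreducibility, uniform $L_1$-continuity, and finite exponential-moment properties packaged in (K2), so that (K1)--(K2) hold. Second, Lemma \ref{M} shows that the biological condition (M) on the carrying capacities $C_m<C_M$ and the interspecific competition coefficients $a_1^M$, $a_2^m$ forces the full set (H1)--(H5) to hold for \eqref{dhmp}, hence for \eqref{dhmp2}. I would state these two observations explicitly and then invoke Theorem \ref{Qspreading} with $Q$ taken to be the semiflow induced by \eqref{dhmp2}: conclusions (i) and (ii) then follow verbatim from Theorem \ref{Qspreading}(i) and (ii).

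I expect no genuine obstacle; the proof is a short corollary. The only points requiring a moment of care are (a) confirming that the change of variables taking \eqref{dhmp} to \eqref{dhmp2} is identical to the one taking \eqref{VL} to \eqref{NModel}, so that the order interval $[0,\beta]$ and the connecting behaviour between $\beta$ and $0$ retain their intended meaning; and (b) noting, via the remark preceding the statement, that $\overline{c}_+\ge c^0_+>0$ by Propositions \ref{rem} and \ref{lb}, which guarantees that both limiting statements are non-vacuous rather than vacuously true for a degenerate zero speed. Once these identifications are recorded, invoking Theorem \ref{Qspreading} completes the proof.
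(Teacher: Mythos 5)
Your proposal is correct and takes essentially the same route as the paper: the paper offers no separate argument for this theorem, simply remarking that it ``is implied by Theorem \ref{Qspreading}'' once Lemma \ref{M} has established that (K1)--(K3) together with (M) yield (H1)--(H5) for system \eqref{dhmp}, hence for its cooperative reformulation \eqref{dhmp2}. One small inaccuracy worth noting: your claim that (K3) and (K1) \emph{supply} the properties packaged in (K2) is not right as a derivation (a symmetric displacement kernel need not be lower semicontinuous, uniformly $L_1$-continuous, or have finite exponential moments), but this is harmless here because (K2) is assumed directly among the theorem's hypotheses.
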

In view of Theorem \ref{MIN}, we have the following result on periodic traveling waves for system \eqref{dhmp}. 
\begin{theorem}\label{her}
	Assume that (K1)--(K3), and (M) hold.
	Then for any $c\ge\overline{c}_+$, system \eqref{dhmp} has an L-periodic rightward traveling wave $(U(x-cn,x),V(x-cn,x))$ connecting $(p^*(x),0)$ to $(0,q^*(x))$ with the wave profile component $U(\xi,x)$  being continuous and non-increasing in $\xi$, and $V(\xi,x)$ being continuous and non-decreasing in $\xi$. While for any $c\in(0,\overline{c}_+)$, system \eqref{dhmp} admits no $L$-periodic rightward traveling wave connecting $(p^*(x),0)$ to $(0,q^*(x))$.
\end{theorem}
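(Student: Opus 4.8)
The plan is to derive Theorem \ref{her} as a corollary of Theorem \ref{MIN} applied to the cooperative system \eqref{dhmp2}, combined with the inverse of the change of variables $u_n=p_n$, $v_n=q^*(x)-q_n$ that produced \eqref{dhmp2} from \eqref{dhmp}. First I would invoke Lemma \ref{M}, which guarantees that (H1)--(H5) hold for system \eqref{dhmp}; in particular (H1)--(H4) are available, so the hypotheses of Theorem \ref{MIN} are satisfied by the associated cooperative system \eqref{dhmp2}. Since \eqref{dhmp2} is exactly the cooperative reformulation \eqref{NModel} specialized to distance-dependent kernels, Theorem \ref{MIN} yields, for every $c\ge\overline{c}_+$, an $L$-periodic rightward traveling wave $(\tilde U(x-cn,x),\tilde V(x-cn,x))$ of \eqref{dhmp2} connecting $\beta=(p^*(x),q^*(x))$ to $0$, with both profile components $\tilde U(\xi,x)$ and $\tilde V(\xi,x)$ continuous and non-increasing in $\xi$.

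Next I would transform this wave back to the competition variables. Because the affine map $(u,v)\mapsto(u,q^*(x)-v)$ carries solutions of \eqref{dhmp2} to solutions of \eqref{dhmp}, setting
\[U(\xi,x):=\tilde U(\xi,x),\qquad V(\xi,x):=q^*(x)-\tilde V(\xi,x)\]
produces a traveling-wave solution $(U(x-cn,x),V(x-cn,x))$ of \eqref{dhmp}. The $L$-periodicity in $x$ is preserved since $q^*$ is $L$-periodic and depends only on $x$, so the shift structure in $\xi=x-cn$ is untouched. Tracking the limits, $\tilde U(-\infty,x)=p^*(x)$ and $\tilde V(-\infty,x)=q^*(x)$ give $U(-\infty,x)=p^*(x)$ and $V(-\infty,x)=0$, while $\tilde U(+\infty,x)=\tilde V(+\infty,x)=0$ give $U(+\infty,x)=0$ and $V(+\infty,x)=q^*(x)$; hence $(U,V)$ connects $(p^*(x),0)$ to $(0,q^*(x))$. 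Monotonicity transfers cleanly: $U=\tilde U$ remains non-increasing in $\xi$, whereas $V=q^*-\tilde V$ is non-decreasing in $\xi$ because $\tilde V$ is non-increasing, and continuity is immediate.

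For the non-existence statement I would argue by contradiction using the forward transformation. If for some $c\in(0,\overline{c}_+)$ system \eqref{dhmp} admitted an $L$-periodic rightward traveling wave $(U(x-cn,x),V(x-cn,x))$ connecting $(p^*(x),0)$ to $(0,q^*(x))$, then $(U(\xi,x),\,q^*(x)-V(\xi,x))$ would be an $L$-periodic rightward traveling wave of \eqref{dhmp2} connecting $\beta$ to $0$ with the same speed $c<\overline{c}_+$, contradicting the non-existence assertion of Theorem \ref{MIN}. This disposes of the whole range $c\in(0,\overline{c}_+)$, which is nonempty because $\overline{c}_+\ge c^0_+>0$.

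The result is essentially a corollary, so I do not expect a deep analytic obstacle; the only point requiring care is the bookkeeping of the conjugacy $(u,v)\mapsto(u,q^*-v)$. One must check that this affine, $x$-dependent but $\xi$-independent map is a bijection between the traveling-wave profiles of \eqref{dhmp2} and those of \eqref{dhmp} that preserves continuity and $L$-periodicity in $x$, intertwines the two solution semiflows so that the defining relation $Q^n[\cdot](x)=V(x-cn,x)$ is respected, exactly reverses the monotonicity of the second component, and correctly interchanges the roles of the boundary equilibria $\hat E_1,\hat E_2$ with $(p^*,0),(0,q^*)$. The essential observation making the reduction legitimate is that $q^*$ depends on $x$ alone and is $L$-periodic, so subtracting it does not disturb the propagation structure encoded in $\xi=x-cn$.
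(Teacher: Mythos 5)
Your proposal is correct and is essentially the paper's own argument: the paper states Theorem \ref{her} as an immediate consequence of Theorem \ref{MIN} (with Lemma \ref{M} supplying (H1)--(H5) for system \eqref{dhmp}), and your write-up simply makes explicit the bookkeeping of the conjugacy $(u,v)\mapsto(u,q^*(x)-v)$ that the paper leaves implicit. No gaps; the monotonicity reversal of the second component and the non-existence part via the forward transformation are exactly as intended.
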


The above results shows that if $a_1^M<\displaystyle \frac{C_m}{C_M}<1<\frac {C_M}{C_m}<a_2^m $, i.e., $1$-th species is always a better and strong competitor, then $1$-th species can invade and futher replace $2$-th species in a osciallting wasy no matter what movement stragety is taken. Below we present some simulations results for the process of invasion. For this purpose, we truncate the infinite domain  $\mathbb{R}$ to a finite domian $[-M,M]$, where $M$ is sufficiently large. The evolution of the solution is shown in figures. Let $r_1=r_2=e$, 
\begin{align*}
a_1(x)=\left\{\begin{array}{l}
0.3, \ 0\leqslant x<5.5, \\[2mm]
0.4,  \ 5.5\leqslant x<10,
\end{array}\right.
a_2(x)=\left\{\begin{array}{l}
2, \ 0\leqslant x<5.5, \\[2mm]
1.5,  \ 5.5\leqslant x<10,
\end{array}\right.
\end{align*}
\begin{align*}
C(x)=\left\{\begin{array}{l}
1, \ 0\leqslant x<5.5, \\[2mm]
0.5,  \ 5.5\leqslant x<10.
\end{array}\right.
\end{align*}
Figure 1 shows that under the same type dispersal kernel, taking a small dispersal, i.e., trying to stay in the patch, cannot help to reduce the loss induced by the intracompetitin. Figure 2 shows that the success of invasion of $1$-th species into $2$-th species is independent of the particular type of dispersal kernels. 
\begin{figure*}[!htbp]
	\begin{center}
		\subfigure[The evolution of $p_n$ with d=1]{
			\includegraphics[width=0.475\textwidth]{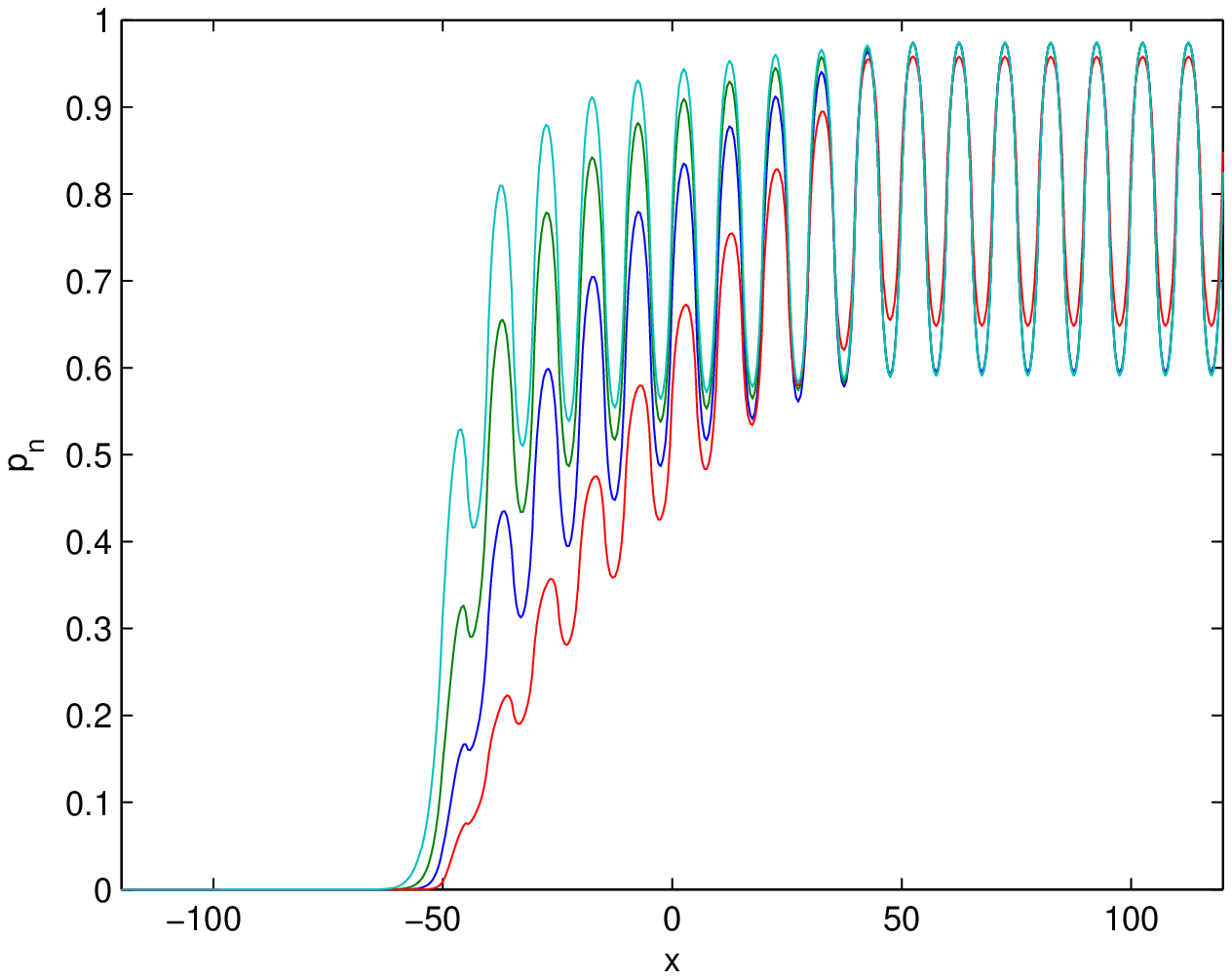} \label{p_n}
		}
		\subfigure[The evolution of $q_n$ with d=0.1]{
			\includegraphics[width=0.475\textwidth]{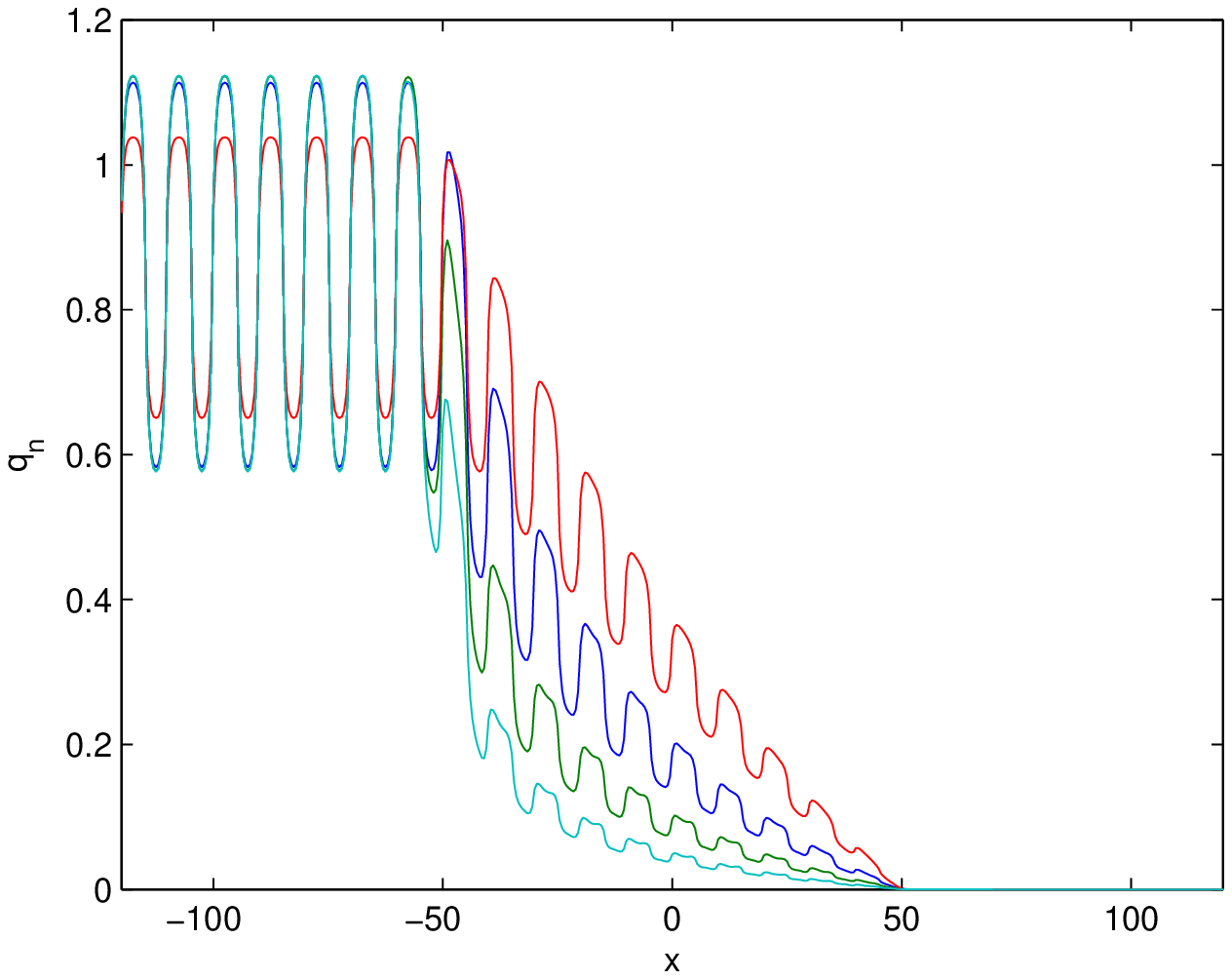} \label{q_n}
		}
		\caption[]{\label{fig} The evolution of $p_n$ and $q_n$ with a Laplace kernel, when $n=2,4,6,8$. }
	\end{center}
\end{figure*}

To obtain the linear determacy of $c^*$, we need to verify (D1) and (D2). Below we provide an example with simple senario where two species have same growh ability and competition ability, but their responses to environment changing are different. We assume that species-$1$, always have better response towards the varying environment conditions than  species-$2$, that is,  $C_1(x)>C_2(x)>0, \ \forall x \in \mathbb{R}$.
\begin{proposition}We consider the following spatially periodic competition model
	\begin{eqnarray}\label{dhmpcom}
	& &p_{n+1}(x)=
	\int_{\mathbb{R}}\frac{rp_n(y)}{1+b_1(y)(p_n(y)+q_n(y))}k(x-y)\di y, \\ 
	& &q_{n+1}(x)=
	\int_{\mathbb{R}}\frac{rq_n(y)}{1+b_2(y)(q_n(y)+p_n(y))}k(x-y)\di y, \ x\in\mathbb{R}, \nonumber
	\end{eqnarray}
	where 
	$b_i(x)=\displaystyle\frac{r-1}{C_i(y)}$, $C_i(x)$ is L-periodic with $C_1(x)>C_2(x)>0$, $a_1=a_2=1$ and $r>1$ are constant, $k(x-y)$ satisfies (K2)-(K3). (H1)-(H5) and (D1)-(D2) are valid.
\end{proposition}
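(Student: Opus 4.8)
The plan is to verify (H1)--(H5) and (D1)--(D2) one condition at a time, exploiting the two structural features of \eqref{dhmpcom}: the growth rate $r$ and the competition coefficients $a_1=a_2=1$ are constant, and the only asymmetry between the species is $b_1(x)=\frac{r-1}{C_1(x)}<\frac{r-1}{C_2(x)}=b_2(x)$, forced by $C_1(x)>C_2(x)$. For (H1) I note that the constant function is a strongly positive eigenfunction of $\check L\phi=\int_{\mathbb R}r\,\phi(y)k(x-y)\,\di y$ with eigenvalue $r$ (using $\int_{\mathbb R}k(x-y)\,\di y=1$), so $\lambda(k_i,r_i)=r>1$; Proposition \ref{VLexistence} then yields strongly positive $L$-periodic semi-trivial states $p^*,q^*$. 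The device used throughout is that $q^*$, being a positive fixed point of $q\mapsto\int\frac{rq}{1+b_2q}k\,\di y$, is a strongly positive eigenfunction with eigenvalue $1$ of the linear operator with weight $\frac{r}{1+b_2q^*}$; hence $\lambda\bigl(k,\frac{r}{1+b_2q^*}\bigr)=1$, and combined with the strict monotonicity of the principal eigenvalue in its weight (Proposition \ref{rem}(i)) this pins several eigenvalues down exactly.

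For (H2) and (H3) I would use this identity directly. Since $b_1<b_2$, the weight $m_1=\frac{r}{1+b_1q^*}$ appearing in (H2) dominates $\frac{r}{1+b_2q^*}$ pointwise; as the latter has principal eigenvalue $1$, Proposition \ref{rem}(i) gives $\lambda\bigl(k_1,\frac{r_1}{1+b_1a_1q^*}\bigr)>1$, which is (H2). For (H3), suppose toward a contradiction that $(\tilde p,\tilde q)\gg0$ is an $L$-periodic steady state and set $T=\tilde p+\tilde q$. Then $\tilde p$ and $\tilde q$ are strongly positive eigenfunctions with eigenvalue $1$ of the weights $g_1=\frac{r}{1+b_1T}$ and $g_2=\frac{r}{1+b_2T}$, respectively, so $\lambda(k,g_1)=\lambda(k,g_2)=1$; but $g_1>g_2$ pointwise forces $\lambda(k,g_1)>\lambda(k,g_2)$ by Proposition \ref{rem}(i), a contradiction, so (H3) holds.

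Conditions (H4) and (H5) follow the pattern already used in Lemma \ref{M}. Because $r$ is constant and $k$ is symmetric, Proposition \ref{rem}(ii)--(iii) show that $\lambda_1(\mu)$, the principal eigenvalue of \eqref{eep}, is even and log-convex with $\lambda_1(0)=r>1$, whence $\lambda_1(\mu)>1$ for all $\mu$ and $c^*_{1+}=\inf_{\mu>0}\frac{\ln\lambda_1(\mu)}{\mu}>0$; the same argument gives $c^*_{2-}>0$, so $c^*_{1+}+c^*_{2-}>0$, i.e. (H4). For (H5) I would again invoke the eigenfunction identity: at $\mu=0$ the weight of \eqref{eep2} is $\frac{r_2}{1+b_2q^*}$, whose principal eigenvalue is $\lambda_2(0)=1$ with eigenfunction $q^*$; since $\ln\lambda_2(\mu)$ is even in $\mu$ (Proposition \ref{rem}(iii)) and smooth, its right derivative at $0$ vanishes, so $\lim_{\mu\to0^+}\frac{\ln\lambda_2(\mu)}{\mu}=0\le c^*_{1+}$.

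The real work is in (D1) and (D2), which must be done in this order, since Proposition \ref{ef} constructs $\phi^*=(\phi_1^*,\phi_2^*)$ only once (D1) guarantees $\lambda_0(\mu_0)>\overline\lambda(\mu_0)=s(\tilde L)$. For (D1) I would compare the weight $M_1=\frac{r}{1+b_1q^*}$ of \eqref{eep0} with the weight $M_2=\frac{r}{(1+b_2q^*)^2}$ of \eqref{Lpep2}: from $a\ge1\Rightarrow a^2\ge a$ applied to $a=1+b_2q^*$, together with $b_1<b_2$, one gets $(1+b_2q^*)^2\ge 1+b_2q^*>1+b_1q^*$, hence $M_1>M_2$ pointwise, and Proposition \ref{rem}(i) gives $\lambda_0(\mu_0)>\overline\lambda(\mu_0)$. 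For (D2), since $a_1=a_2=1$ the requirement reduces to $\phi_1^*\ge\phi_2^*$, which I would prove by showing $\phi_1^*$ is a supersolution of the resolvent equation $(\lambda_0(\mu_0)I-\tilde L)\phi_2=h$ defining $\phi_2^*$ in Proposition \ref{ef}: subtracting the eigen-equations for $\phi_1^*$ under weights $M_1$ and $M_2$ reduces the inequality $(\lambda_0(\mu_0)I-\tilde L)\phi_1^*\ge h$ to the pointwise bound $M_1-M_2\ge M_2\,b_2\,q^*$, i.e. $M_1\ge\frac{r}{1+b_2q^*}$, which is exactly $b_1\le b_2$ and so holds by $C_1\ge C_2$. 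Because $\lambda_0(\mu_0)>s(\tilde L)$ makes the resolvent $(\lambda_0(\mu_0)I-\tilde L)^{-1}$ a positive operator, applying it to $(\lambda_0(\mu_0)I-\tilde L)(\phi_1^*-\phi_2^*)\ge0$ yields $\phi_1^*\ge\phi_2^*$, giving (D2). The main obstacle I anticipate is precisely this supersolution/resolvent-positivity step: it requires correctly reading off the operator $\tilde L$ and the source term $h$ from the construction in Proposition \ref{ef}, verifying the pointwise comparison, and invoking positivity of the resolvent, and it is the only place where the argument is not a one-line consequence of the eigenfunction identity together with Proposition \ref{rem}(i).
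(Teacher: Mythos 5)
Your proposal is correct, and on (H1), (H3), (H4), (H5), (D1) and (D2) it is essentially the paper's own proof: the same contradiction via Proposition \ref{rem}(i) for (H3), the same evenness/log-convexity argument for (H4)--(H5), the same pointwise comparison of the weights $\frac{r}{1+b_1q^*}$ and $\frac{r}{(1+b_2q^*)^2}$ for (D1), and for (D2) the same supersolution computation showing $(\lambda_0(\mu_0)I-\tilde L)\phi_1^*\ge h$; you merely make explicit the resolvent-positivity step (that $\lambda_0(\mu_0)>s(\tilde L)$ makes $(\lambda_0(\mu_0)I-\tilde L)^{-1}$ positive) which the paper compresses into ``$\phi_1^*$ is an upper solution, and hence $\phi_1^*\ge\phi_2^*$''. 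The genuine difference is (H2), and there your route is better. The paper proves $q^*\le q_0\le C_2^M$ and then estimates
\begin{equation*}
\frac{r}{1+\frac{r-1}{C_1(y)}q^*(x)}\ \geqslant\ \frac{r}{1+\frac{r-1}{C_2(y)}q^*(x)}\ \geqslant\ \frac{r}{1+\frac{r-1}{C_2^m}C_2^M}\ >\ 1,
\end{equation*}
but the last inequality is equivalent to $C_2^M<C_2^m$, which never holds (even the non-strict version forces $C_2$ to be constant); what would really be needed is the pointwise bound $q^*(y)\le C_2(y)$, and steady states of heterogeneous integrodifference models can overshoot the local carrying capacity, so the published chain has a gap. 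Your argument --- $q^*$ is a strongly positive eigenfunction with eigenvalue $1$ of the operator with weight $\frac{r}{1+b_2q^*}$, hence $\lambda\bigl(k,\frac{r}{1+b_2q^*}\bigr)=1$ by uniqueness of the eigenvalue admitting a positive eigenfunction, and then $b_1<b_2$ together with Proposition \ref{rem}(i) gives $\lambda\bigl(k,\frac{r}{1+b_1q^*}\bigr)>1$ --- requires no bound on $q^*$ at all, and it is precisely the fixed-point-as-eigenfunction identity that the paper itself invokes for (H3) and (implicitly, via $\lambda_2(0)=1$) for (H5). So your proof follows the paper's strategy everywhere except (H2), where it actually repairs the argument.
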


\begin{proof}
		(H1) holds immedately by Proposition \ref{rem} (i) with $m_i(x)=r>1, \ (i=1,2)$.
		
		Now we verify (H2). Let $(0,q^*)$ be the $L$-periodic semi-trivial steady state of sytem (\ref{dhmp}), which is guaranteed by (H1), and $q_0=\max\limits_{x\in[0,L]}q^*(x)$. By a comparison argument, we have
		\[\int_{\mathbb{R}}\frac{rq_0}{1+\frac{r-1}{C_2(y)}q_0}k(x-y)\di y\geqslant \int_{\mathbb{R}}\frac{rq^*}{1+\frac{r-1}{C_2(y)}q^*}k(x-y)\di y=q^*.\]
		It then follows that
		\[\int_{\mathbb{R}}\frac{rq_0}{1+\frac{r-1}{C_2(y)}q_0}k(x-y)\di y\geqslant q_0,\]
		and hence,
		\[\int_{\mathbb{R}}\frac{rq_0}{1+\frac{r-1}{C^M_2}q_0}k(x-y)\di y
		\geqslant\int_{\mathbb{R}}\frac{rq_0}{1+\frac{r-1}{C^M_2(y)}q_0}k(x-y)\di y\geqslant q_0,\]
		that is,
		\[\frac{r}{1+\frac{r-1}{C^M_2}q_0}\geqslant 1, \ \text{i.e.,}\ q^*\leqslant q_0\leqslant C^M_2. \]
		Taking $m_1(x)=\displaystyle\frac {r}{1+b_1(x)q^*(x)}=\frac{r}{1+\frac{r-1}{C_1(y)}q^*(x)}$, we have 
		\[\frac{r}{1+\frac{r-1}{C_1(y)}q^*(x)}\geqslant \frac{r}{1+\frac{r-1}{C_2(y)}q^*(x)}\geqslant \frac{r}{1+\frac{r-1}{C^m_2}C^M_2}>\frac {r}{1+r-1}=1.\]
		Thus, $\lambda\Big(k, \displaystyle\frac {r}{1+b_1q^*}\Big)>1$ due to Proposition \ref{rem} (i).
		
		We prove (H3) by a way of contradiction. Suppose that $(\tilde{p},\tilde{q})$ is the positive $L$-periodic steady state. Since
		\begin{eqnarray}\label{coe}
		& &\tilde{p}(x)=
		\int_{\mathbb{R}}\frac{r\tilde{p}(y)}{1+b_1(y)(\tilde{p}(y)+\tilde{q}(y))}k(x-y)\di y, \\ 
		& &\tilde{q}(x)=
		\int_{\mathbb{R}}\frac{r\tilde{q}(y)}{1+b_2(y)(\tilde{q}(y)+\tilde{p}(y))}k(x-y)\di y, \ x\in\mathbb{R}, \nonumber
		\end{eqnarray}
		it follows that $$\lambda \Big(k,\frac {r}{1+b_1(\tilde{p}+\tilde{q})}\Big)=\lambda \Big(k,\frac {r}{1+b_2(\tilde{p}+\tilde{q})}\Big)=1.$$
		Note that $$\frac {r}{1+b_1(\tilde{p}+\tilde{q})}> \frac {r}{1+b_2(\tilde{p}+\tilde{q})}.$$
		Then Proposition \ref{rem} (i) implies that 
		 $$\lambda \Big(k,\frac {r}{1+b_1(\tilde{p}+\tilde{q})}\Big)>\lambda \Big(k,\frac {r}{1+b_2(\tilde{p}+\tilde{q})}\Big),$$
		 which is a contradiction.
		 
		 Assumptions (H4) and (H5) can be verified by arguments similar to those in the proof of Lemma \ref{M}.
		 
		 Condition (D1) is easy to verify. Since $1+b_1q^*<(1+b_1q^*)^2<(1+b_2q^*)^2$ with $C_1(x)>C_2(x), \ \forall x\in \mathbb{R}$, then for enigenvalue problems \eqref{eep0} and \eqref{Lpep2}, we have $\lambda_0(\mu)>\overline{\lambda}(\mu)$ due to Proposition \ref{rem} (i).
		 
		 Regarding condition (D2), since $a_1=a_2=1$, we have $\max\{a_1(x),\frac{1}{a_{2}(x)}\}=1$. Let $\phi^*=(\phi^*_1,\phi^*_2)$ be the associated positive eigenfunctions associated with the principle eigenvalue $\lambda$ of the periodic eigenvalue problem \eqref{Lpep}. SInce 
		 	\begin{align*}
		 	&\lambda \phi^*_1-\int_{\mathbb{R}}\frac{r}{1+b_2(y)q^*(y)}\cdot \frac{b_2(y)q^*(y)\phi_1^*(y)+\phi^*_1(y)}{1+b_2(y)q^*(y)}e^{-\mu(y-x)}k(x-y)\di y,\\
		 	&=\lambda \phi^*_1-\int_{\mathbb{R}}\frac{r}{1+b_2(y)q^*(y)}\phi_1^*(y)e^{-\mu(y-x)}k(x-y)\di y\\
		 	&\geqslant\lambda \phi^*_1-\int_{\mathbb{R}}\frac{r}{1+b_1(y)q^*(y)}\phi_1^*(y)e^{-\mu(y-x)}k(x-y)\di y\\
		 	&=\lambda \phi^*_1-\lambda \phi^*_1=0,
		 	\end{align*}
		 	it follows that $\phi^*_1$ is a upper solution, and hence,  
		 	$\phi^*_1\geqslant \phi^*_2$, i.e., $\frac {\phi^*_1}{\phi^*_2}\geqslant 1$.
\end{proof}

To finish this paper, we remark that in the case where system \eqref{VL} admits a unique positive steady state, its spatial dymamics is relatively simple from the viewpoint of mathematical analysis, as we can apply the theory developed in \cite{Liang2,Wein02} directly to the existence of two different spatially periodic travelling waves connecting $(0,q^*)$ and $(\tilde{p},\tilde{q})$, $(p^*,0)$ and $(\tilde{p},\tilde{q})$, respectively, under appropriate conditions.

\begin{figure*}[!htbp]
	\begin{center}
		\subfigure[The evolution of $p_n$ with a Gaussian kernel.]{
			\includegraphics[width=0.475\textwidth]{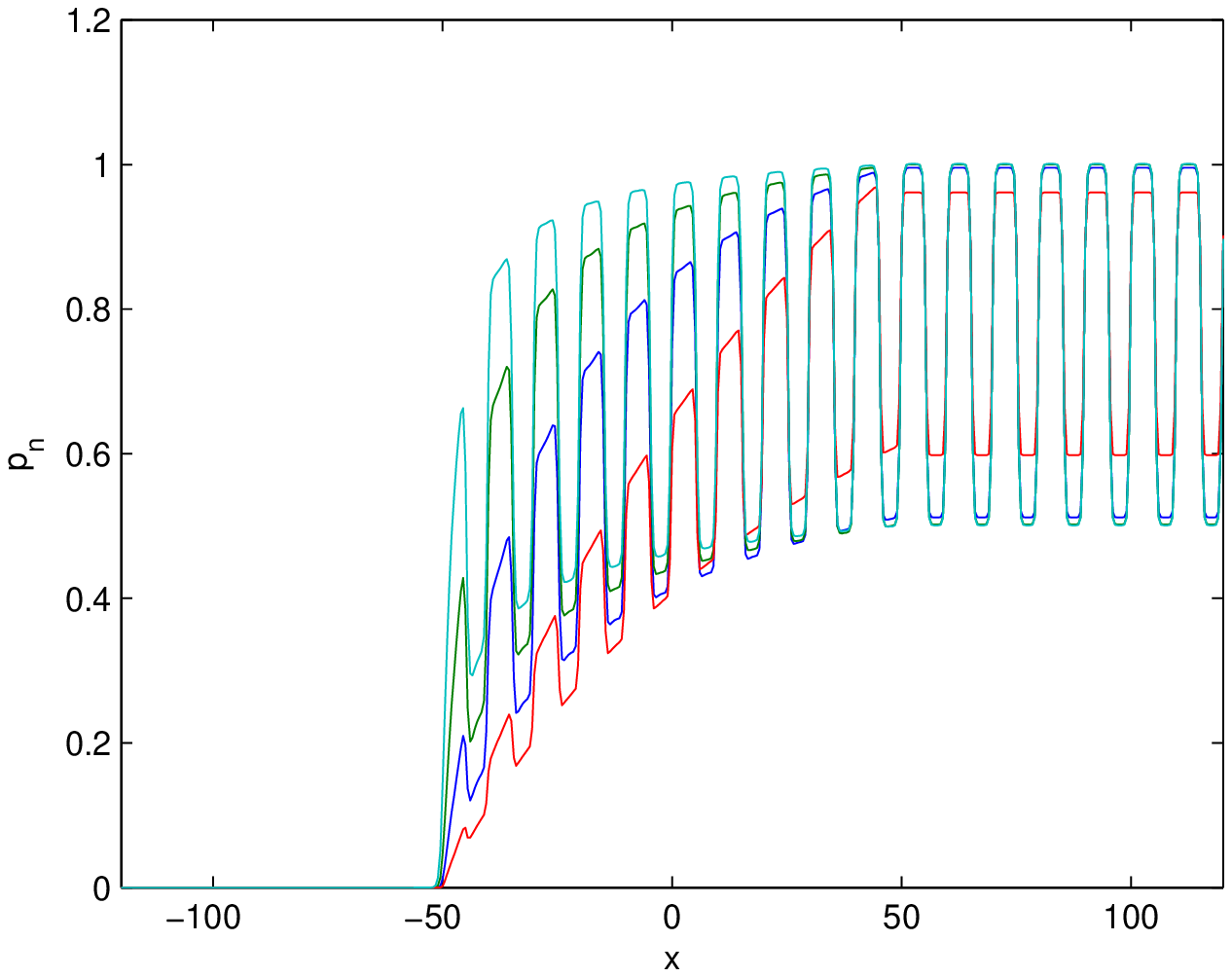} 
		}
		\subfigure[The evolution of $q_n$ with a Laplace kernel.]{
			\includegraphics[width=0.475\textwidth]{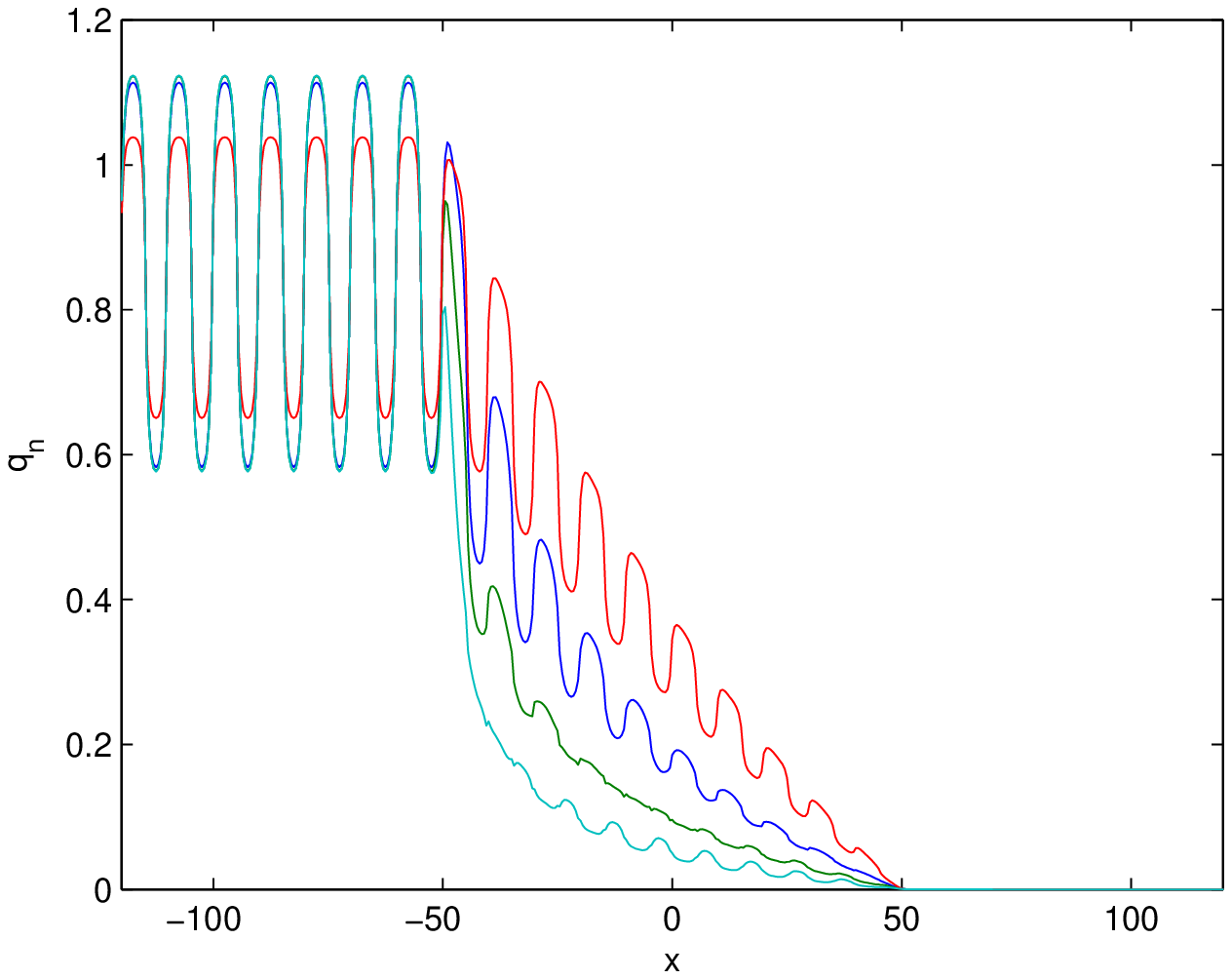} 
		}
		\caption[]{\label{fig} The evolution of $p_n$ and $q_n$ with $k_1(x-y)=\frac{1}{\sqrt{2\pi\times0.1}}e^\frac{-(x-y)^2}{0.2}$, $k_2(x-y)=\frac{1}{2\times0.5}e^\frac{-|x-y|}{0.5}$, when $n=2,4,6,8$.
		}
	\end{center}
\end{figure*}

\section*{Acknowledgement}
We are grateful to Drs. Frithjof Lutscher and Xiao Yu for helpful discussions and valuable comments.

\end{document}